\newtheorem{example}{Example}
\newtheorem{lemma}{Lemma}
\newtheorem{proposition}{Proposition}
\newtheorem{remark}{Remark}
\newcommand{\dd}{\partial \overline{\partial} }
\newcommand{\p}{\partial}
\newcommand{\op}{\overline{\partial} }
\newcommand{\C}{\mathbb{C}}
\title[Local models]{Local models for conical K\"ahler-Einstein  metrics}
\author[M. de Borbon]{Martin de Borbon}
\author[C. Spotti]{Cristiano Spotti}
\address{QGM Centre for Quantum Geometry of Moduli Spaces, Aarhus University}
\email{mdb@qgm.au.dk;  c.spotti@qgm.au.dk}
\begin{document}

\maketitle

\begin{abstract}
	In this note we use the Calabi ansatz, in the context of metrics with conical singularities along a divisor, to produce regular Calabi-Yau cones and K\"ahler-Einstein metrics of negative Ricci with a cuspidal point. As an application, we describe singularities and cuspidal ends of the completions of the complex hyperbolic metrics on the moduli spaces of ordered configurations of points in the projective line introduced by Thurston and Deligne-Mostow.

\end{abstract}

\section{Introduction}

Calabi, in \cite{calabi}, considered lifts of K\"ahler metrics on a base manifold to the total space of a hermitian holomorphic vector bundle over it by imposing invariance under the unitary group of the fibre. The K\"ahler-Einstein equation for the lifted metric reduces to an ODE and Calabi studied the problem of existence of solutions which satisfy appropriate boundary conditions that guarantee a smooth extension over the zero section and completeness. A very particular case is when the base is a K\"ahler-Einstein (KE) Fano manifold and the vector bundle is a rational multiple of the canonical bundle equipped with
the induced hermitian metric. It is then easy to check that a suitable power of the norm function on the fibres is a K\"ahler potential for a Calabi-Yau (CY) cone metric on the blow-down of the total space at the zero section. These metrics are some natural candidates for tangent cones of weak KE metrics on singular varieties, see \cite{donaldsonsunII} and \cite{heinsun}.

In this note we partially extend the ansatz to the setting of K\"ahler-Einstein metrics with conical singularities along a divisor, thus providing models for tangent cones of KE metrics on klt pairs. Moreover,  we describe a lift of Ricci-flat metrics on the base to KE metrics of negative scalar curvature with a cuspidal point which seem possibly relevant in the study of asymptotics of KE metrics on log-canonical pairs as in \cite{bermanguenancia}. We also specialize our constructions to the cases of log constant holomorphic sectional curvature metrics, which we define \emph{inductively}.  As an application we describe  the asymptotic geometry of the completions of the complex hyperbolic metrics on the moduli spaces \( \mathcal{M}_{0, n+3} \) of weighted ordered configurations of points on \(\mathbb{CP}^1\) defined by Thurston \cite{thurston} and Deligne-Mostow \cite{delignemostow}. 
In the last section we return to the more general K\"ahler-Einstein setting by discussing relations to normalized volumes of valuations and proposing a Bogomolov-Miyaoka-Yau inequality in this context which characterizes the equality case with constant holomorphic sectional curvature log metrics.

\subsection*{Acknowledgments} Both authors are supported by AUFF Starting Grant 24285 and DNRF Grant DNRF95 QGM `Centre for Quantum Geometry of Moduli Spaces'. C. S. is also supported by Villum Fonden 0019098.

\section{Background}

For readers' convenience, and for fixing the notation, we briefly recall some  definitions and standard constructions for smooth K\"ahler metrics.

\subsection{\(\eta\)-Sasaki-Einstein metrics}
Let \((X, \omega)\) be a K\"ahler manifold with \( [\omega] \in 2 \pi c_1 (L^{-1}) \), where \(L^{-1}\) is an ample holomorphic line bundle over \(X\). Write \(L\) for its dual and \( \pi : L \to X \) for the projection map. Let \(h\) be a hermitian metric on \(L\) with curvature \(i \omega\), that is \( i \dd \log \rho^2 = \pi^* \omega \),  where \( \rho : L \to \mathbb{R}_{\geq 0} \) is the norm function given by \( \rho (s) = |s|_h \). On the complement of the zero section  \(L^{\times}\) we have the global angular \(1\)-form
\begin{equation}
	d \theta = \mbox{Im} (\partial \log \rho^2),
\end{equation}
which satisfies \( d (d\theta) = \pi^* \omega \). Scalar multiplication on the fibres defines a \(\C^*\)-action: there is a real holomorphic vector field \(V (p) = \frac{d}{dt} |_{t=0}  (e^{it} \cdot p) \) and \( d\theta (V) \equiv 1 \). Each level set \( \{ \rho = const >0 \} \) is a principal \(S^1\)-bundle over \(X\) and the restriction of \(i d\theta\) to it is the induced Chern connection. 

For any \( \gamma>0 \) define
\begin{equation} \label{calabiansatz1}
	\omega_C = \frac{i}{2} \dd r^2 , \hspace{3mm} \mbox{with} \hspace{3mm} r^2 = \rho^{2\gamma} .
\end{equation}
By using the identity \( \dd f = f \dd \log f + f^{-1} \partial f \wedge \overline{\partial} f \) we have that \( \omega_C = (\gamma \rho^{2\gamma} / 2) \pi^* \omega + \gamma^2 \rho^{2\gamma -1} d\rho d \theta \). The standard polar coordinates relations \( I d\rho = - \rho d\theta \) and \( I d \theta = \rho^{-1} d\rho \), where \(I\) denotes the complex structure acting on \(1\)-forms as \( I \alpha (\cdot) = \alpha (I \cdot) \),  show that the  associated metric is
\begin{equation}
	g_C = dr^2 + r^2 \overline{g} , \hspace{3mm} \mbox{with} \hspace{3mm} \overline{g} = \pi^* (\gamma g/2) + \gamma^2 d \theta^2 .
\end{equation}
We recognize \( (L^{\times}, g_C) \) as a Riemannian cone with link \( (Y, \overline{g}) \), where \( Y = \{\rho =1\}  \).

\begin{proposition} \label{P1} (e.g., \cite{boyergalicki, sparks})
\( \overline{g} \)	is a regular sasakian metric on \(Y\) with contact form \( \eta = \gamma d \theta \) and Reeb vector field \( \xi = \gamma^{-1} V \). Moreover, \( \mbox{Ric}_{g} = \mu g \) if and only if
\( \mbox{Ric}_{\overline{g}} = ( (2/\gamma)\mu -2 ) \overline{g} + (2n +2 - (2/\gamma)\mu) \eta^2,  \) i.e., \(\overline{g}\) is an \(\eta\)-\emph{Sasaki-Einstein metric}.

\end{proposition}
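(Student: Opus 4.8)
The plan is to handle the two assertions in turn. That $(Y,\bar g)$ is regular Sasakian with the stated contact data is essentially a reinterpretation of the construction: a Riemannian manifold is Sasakian exactly when its metric cone is K\"ahler, and here $\gamma>0$ together with $\omega>0$ makes $\omega_C=(\gamma r^2/2)\pi^*\omega+\gamma r\,dr\wedge d\theta$ a genuine K\"ahler form on $L^\times$, so that $(L^\times,g_C)=(C(Y),dr^2+r^2\bar g)$ is a K\"ahler cone and $(Y,\bar g)$ is Sasakian. To identify $\xi$ and $\eta$ I would use that the Reeb field of a K\"ahler cone is $J(r\partial_r)$ restricted to $Y=\{r=1\}$: from $r\partial_r=\gamma^{-1}\rho\partial_\rho$ and the polar relations $I\,d\rho=-\rho\,d\theta$, $I\,d\theta=\rho^{-1}d\rho$ one reads off $J(\rho\partial_\rho)=V$, hence $\xi=\gamma^{-1}V$, and pairing with $\bar g$ (using $\pi_*V=0$) gives $\eta=\iota_\xi\bar g=\gamma\,d\theta$. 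That $\eta$ is a contact form with Reeb field $\xi$ is then immediate from $d\eta=\gamma\,d(d\theta)=\gamma\,\pi^*\omega$, since $\eta\wedge(d\eta)^n=\gamma^{n+1}d\theta\wedge\pi^*\omega^{\,n}\neq0$ and $\iota_\xi d\eta=\iota_V\pi^*\omega=0$; regularity is precisely the fact, already recalled above, that $Y$ is a principal $S^1$-bundle over $X$, the structure action being the flow of $\xi$.

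For the Einstein correspondence I would first compute the Ricci form of $\omega_C$. In local holomorphic coordinates $(z_1,\dots,z_n,w)$ on $L^\times$, choosing a local K\"ahler potential $\psi$ for $\omega$ so that $\rho^2=|w|^2e^{\psi}$ and hence $r^2=|w|^{2\gamma}e^{\gamma\psi}$, a block-matrix evaluation of the complex Hessian of $r^2$ yields $\det(\partial_a\partial_{\bar b}\,r^2)=\gamma^{\,n+2}\,|w|^{-2}\,(r^2)^{\,n+1}\,\det(\psi_{j\bar k})$; the key feature is that the only non-pluriharmonic factor is $|w|^{-2}$, which dies under $-i\dd\log(\cdot)$ on $L^\times$. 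Using $i\dd\log\rho^2=\pi^*\omega$, $-i\dd\log\det(\psi_{j\bar k})=\pi^*\mathrm{Ric}(\omega)$ and $-i\dd\log r^2=-\gamma\pi^*\omega$ one then obtains the clean identity
\[
\mathrm{Ric}(\omega_C)=\pi^*\mathrm{Ric}(\omega)-(n+1)\gamma\,\pi^*\omega .
\]
Since $\pi$ is a holomorphic submersion and the horizontal distribution (orthogonal to the fibres) is $J$-invariant, the pullback of a $(1,1)$-form corresponds to the pullback of its associated symmetric $2$-tensor, so this reads $\mathrm{Ric}_{g_C}=\pi^*(\mathrm{Ric}_g-(n+1)\gamma\,g)$ as tensors on $C(Y)$.

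It remains to descend to the link. With the classical cone formula $\mathrm{Ric}_{g_C}\big|_{TY}=\mathrm{Ric}_{\bar g}-2n\,\bar g$ (here $\dim Y=2n+1$) and with $\pi^*g=\tfrac{2}{\gamma}(\bar g-\eta^2)$, read off from $\bar g=\tfrac{\gamma}{2}\pi^*g+\eta^2$ and $\eta=\gamma\,d\theta$, the relation $\mathrm{Ric}_{g_C}=\pi^*(\mathrm{Ric}_g-(n+1)\gamma\,g)$ becomes, after a one-line linear computation and in the case $\mathrm{Ric}_g=\mu g$, exactly
\[
\mathrm{Ric}_{\bar g}=\Big(\tfrac{2}{\gamma}\mu-2\Big)\bar g+\Big(2n+2-\tfrac{2}{\gamma}\mu\Big)\eta^2 .
\]
Conversely, since every step above is a reversible linear identity and $\pi|_Y$ is a submersion onto $X$, this shape of $\mathrm{Ric}_{\bar g}$ forces $\mathrm{Ric}_g-(n+1)\gamma\,g=(\mu-(n+1)\gamma)g$, i.e.\ $\mathrm{Ric}_g=\mu g$. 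The single genuinely computational step — and the place where a sign or normalization is most likely to slip — is that Hessian determinant, which must come out with precisely the factor $(r^2)^{n+1}|w|^{-2}$; the remaining work is bookkeeping of the constant $(n+1)\gamma$ and of the dictionary between the Ricci form and the Ricci tensor along $\pi$.
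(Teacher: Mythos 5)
Your proof is correct. The paper does not actually give a proof of Proposition \ref{P1} (it is stated with a citation to the standard Sasakian references), but your argument is the standard one, and its key computational ingredient, $\mathrm{Ric}(\omega_C)=\pi^*\left(\mathrm{Ric}(\omega)-(n+1)\gamma\,\omega\right)$, is exactly the paper's Lemma \ref{L1}, which the paper proves by the same volume-form computation $|w|^{-2}\rho^{2\gamma(n+1)}(\pi^*\omega)^n\,dw\,d\overline{w}$ that you carry out. The remaining descent to the link via $\mathrm{Ric}_{g_C}|_{TY}=\mathrm{Ric}_{\overline{g}}-2n\,\overline{g}$ and $\pi^*g=\tfrac{2}{\gamma}(\overline{g}-\eta^2)$ is a correct and reversible linear bookkeeping step that reproduces the stated coefficients in both directions.
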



The proposition establishes a correspondence between KE and regular \(\eta\)-Sasaki-Einstein metrics: if \(X\) is simply connected the curvature condition \(d (d\theta) = \pi^*\omega \) determines the connection uniquely up to gauge transformations and therefore the lifted metric \(\overline{g}\) up to isometry. In the non-simply connected case we can take connections with the same curvature and different holonomy, which corresponds to taking different holomorphic line bundles \(L\) with the same first Chern class.
	

\subsection{Calabi-Yau cones} 

\begin{lemma}\label{L1} (\cite{boyergalicki, sparks}) Let \(\omega_C\) be the K\"ahler cone metric defined by Equation \ref{calabiansatz1}, then
	\begin{equation}
	\mbox{Ric}(\omega_C) = \pi^* \left( \mbox{Ric}(\omega) - \gamma(n+1) \omega \right).
	\end{equation}
	 \(\omega_C\) is Ricci-flat if and only if \( \mbox{Ric}(\omega) = \mu \omega \) for some \(\mu>0\) and \( \gamma = \mu (n+1)^{-1} \).
\end{lemma}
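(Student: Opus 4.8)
The plan is to compute the top power $\omega_C^{\,n+1}$ of the cone form explicitly in a local holomorphic frame and then read off its Ricci form from the density against a local holomorphic volume form, using the identity $\mathrm{Ric}(\omega_C)=-i\,\dd\log\bigl(\omega_C^{\,n+1}/(i^{(n+1)^2}\Omega\wedge\overline{\Omega})\bigr)$. Starting from the formula $\omega_C=(\gamma\rho^{2\gamma}/2)\,\pi^*\omega+\gamma^2\rho^{2\gamma-1}\,d\rho\,d\theta$ recorded above, the relation $(\pi^*\omega)^{n+1}=0$ (the base is $n$-dimensional) together with $\tfrac{i}{2}\p\log\rho^2\wedge\op\log\rho^2=\rho^{-1}d\rho\wedge d\theta$ — immediate from $\p\log\rho^2=\rho^{-1}d\rho+i\,d\theta$ — give
\[
	\omega_C^{\,n+1}=c_{n,\gamma}\,\rho^{2\gamma(n+1)}\,(\pi^*\omega)^n\wedge\tfrac{i}{2}\,\p\log\rho^2\wedge\op\log\rho^2
\]
for a positive constant $c_{n,\gamma}$ depending only on $n$ and $\gamma$.

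Next I would trivialise. On an open set $U\subset X$ pick holomorphic coordinates $z=(z_1,\dots,z_n)$ and a holomorphic frame of $L$ with fibre coordinate $w$, so that $\rho^2=|w|^2e^{\psi}$ with $\psi=\psi(z)$; the curvature condition $i\,\dd\log\rho^2=\pi^*\omega$ together with pluriharmonicity of $\log|w|^2$ then says exactly that $\psi$ is a Kähler potential, $i\,\dd\psi=\omega$ on $U$. Now $\p\log\rho^2=dw/w+\p\psi$, and since $(\pi^*\omega)^n=\pi^*(\omega^n)$ is pulled back from the $n$-dimensional base it annihilates the horizontal forms $\pi^*\p\psi$ and $\pi^*\op\psi$; hence only the vertical piece $\tfrac{i}{2}\tfrac{dw}{w}\wedge\tfrac{d\overline{w}}{\overline{w}}=\tfrac{1}{|w|^2}\tfrac{i}{2}\,dw\wedge d\overline{w}$ survives, and
\[
	\omega_C^{\,n+1}=c'_{n,\gamma}\,|w|^{2\gamma(n+1)-2}\,e^{\gamma(n+1)\psi}\,\det(\psi_{\alpha\overline{\beta}})\;i^{(n+1)^2}\,\Omega\wedge\overline{\Omega},\qquad \Omega=dz_1\wedge\dots\wedge dz_n\wedge dw,
\]
with $c'_{n,\gamma}>0$. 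Applying $-i\,\dd\log$ to this density: the constant and the pluriharmonic factor $|w|^{2\gamma(n+1)-2}$ drop out, $-i\,\dd\bigl(\gamma(n+1)\psi\bigr)=-\gamma(n+1)\,\pi^*\omega$, and $-i\,\dd\log\det(\psi_{\alpha\overline{\beta}})=\pi^*\mathrm{Ric}(\omega)$. Summing gives $\mathrm{Ric}(\omega_C)=\pi^*\bigl(\mathrm{Ric}(\omega)-\gamma(n+1)\omega\bigr)$ over $\pi^{-1}(U)$, and since the left-hand side is globally defined this is the asserted identity on all of $L^{\times}$.

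For the last sentence of the lemma I would argue that, $\pi$ being a submersion, $\pi^*$ is injective on forms over $X$, so $\mathrm{Ric}(\omega_C)=0$ is equivalent to $\mathrm{Ric}(\omega)=\gamma(n+1)\,\omega$; setting $\mu:=\gamma(n+1)>0$ this reads $\mathrm{Ric}(\omega)=\mu\omega$ with $\gamma=\mu(n+1)^{-1}$, and conversely these two conditions force $\mathrm{Ric}(\omega_C)=0$. The computation itself is routine; the only genuinely delicate point is bookkeeping — pinning down the sign and normalisation conventions relating $h$, its curvature and $c_1(L^{-1})$, and verifying that the horizontal contributions $\p\psi,\op\psi$ really are killed by $(\pi^*\omega)^n$, which is exactly the step that reduces everything to the flat model on $\C^{n+1}$. (Alternatively one could extract the formula from the cone structure $g_C=dr^2+r^2\overline{g}$ via the standard cone curvature identities together with Proposition~\ref{P1}, but the $\dd\log$-of-volume route is shorter and self-contained.)
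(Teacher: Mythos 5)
Your proof is correct and follows essentially the same route as the paper: the paper also trivialises $L$ with a fibre coordinate $w$, computes the volume form $|w|^{-2}\rho^{2\gamma(n+1)}(\pi^*\omega)^n\,dw\,d\overline{w}$ up to a constant, and reads off the Ricci form from $-i\,\dd\log$ of the density. You have merely supplied the intermediate steps (the vanishing of the horizontal contributions against $(\pi^*\omega)^n$ and the pluriharmonicity of $\log|w|^2$ on $L^{\times}$) that the paper leaves implicit.
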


\begin{proof}
We define local complex coordinates \(z_1, \ldots, z_n, w\) by taking a non-vanishing holomorphic section \( s \) and write points in \(L\) as \( p = w(p) s (\pi(p)) \). In these coordinates we have that, up to a constant factor, the volume form of the metric is 
\(|w|^{-2}  \rho^{2\gamma(n+1)} (\pi^* \omega)^n d w d \overline{w}. \) The claim follows from \( \mbox{Ric}(\omega) = - i \dd \log \det (g_{\alpha \overline{\beta}}) \).\end{proof}
An interesting numerical invariant for CY cones is given by the \emph{metric density}. Note that in such case \(L\) must be a rational multiple of the canonical bundle of a Fano manifold $X$ of Fano index \(I(X)\), i.e., $L=K_X^{\frac{m}{I(X)}}$ with \(m >0\), and the density is then equal to: 
\[\nu:=\frac{Vol(Y, \bar{g})}{Vol(S^{2n+1}(1))}= \frac{I(X)}{m (n+1)^{n+1}} c_1^n(X).\] 

An observation, which will be important later on,  is the following:

\begin{lemma}\label{L2}
Let \(\omega_C\) be a Calabi-Yau cone. For \( \lambda = \pm 1 \) the expression
	\begin{equation}
	\omega_{C, \lambda} = \lambda \frac{i}{2} \dd \log(1+ \lambda r^2) 
	\end{equation}
defines for  \( \lambda =1\) a KE metric  with positive  scalar curvature, resp. negative for \(\lambda = -1\) and  \(\{r< 1\}\).  The tangent cone of \(\omega_{C, \lambda}\) at the singularity is \(\omega_C\).
\end{lemma}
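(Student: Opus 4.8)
The plan is to reduce the whole statement to one volume-form (Monge--Amp\`ere) computation and then quote that $\omega_C$ is Ricci-flat, exactly in the spirit of Lemma~\ref{L1}. I would write $\psi = r^2$ and $F_\lambda(x) = \lambda\log(1+\lambda x)$, so that $\omega_{C,\lambda} = \tfrac{i}{2}\dd(F_\lambda\circ\psi)$, with $F_\lambda'(x) = (1+\lambda x)^{-1}$ and $F_\lambda''(x) = -\lambda(1+\lambda x)^{-2}$ (using $\lambda^2 = 1$).

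First, the chain rule $\dd(F\circ\psi) = F'(\psi)\,\dd\psi + F''(\psi)\,\p\psi\wedge\op\psi$ gives
\[
	\omega_{C,\lambda} = F_\lambda'(\psi)\,\omega_C + F_\lambda''(\psi)\,\tfrac{i}{2}\,\p\psi\wedge\op\psi ,
\]
which in a local holomorphic frame is a rank-one modification $F_\lambda'(g_C)_{\alpha\overline{\beta}} + F_\lambda''\,\p_\alpha\psi\,\p_{\overline{\beta}}\psi$ of the positive matrix $F_\lambda'\,g_C$. By the matrix determinant lemma,
\[
	\det(g_{C,\lambda}) = (F_\lambda')^{\,n+1}\big(1 + \tfrac{F_\lambda''}{F_\lambda'}\,g_C^{\alpha\overline{\beta}}\p_\alpha\psi\,\p_{\overline{\beta}}\psi\big)\det(g_C) ,
\]
and here I would invoke the cone identity $g_C^{\alpha\overline{\beta}}\p_\alpha(r^2)\,\p_{\overline{\beta}}(r^2) = r^2$, which follows from $g_C = dr^2 + r^2\overline{g}$ (check it on the flat cone $\C^{n+1}$; equivalently, $r\p_r$ generates the cone homothety). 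Substituting, the bracket collapses to $1 - \lambda r^2(1+\lambda r^2)^{-1} = (1+\lambda r^2)^{-1}$, so $\det(g_{C,\lambda}) = (1+\lambda r^2)^{-(n+2)}\det(g_C)$.

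Next, from $\mbox{Ric}(\omega) = -i\dd\log\det(g_{\alpha\overline{\beta}})$ together with $\mbox{Ric}(\omega_C) = 0$ (Lemma~\ref{L1}) I get
\[
	\mbox{Ric}(\omega_{C,\lambda}) = (n+2)\,i\dd\log(1+\lambda r^2) = 2(n+2)\lambda\,\omega_{C,\lambda},
\]
the last step being just the definition of $\omega_{C,\lambda}$ (recall $\lambda^{-1} = \lambda$); hence $\omega_{C,\lambda}$ is K\"ahler--Einstein with Einstein constant of sign $\lambda$, of positive scalar curvature when $\lambda = 1$ and negative when $\lambda = -1$. To know $\omega_{C,\lambda}$ is a genuine metric I would check positivity: for $\lambda = -1$ on $\{r<1\}$ both $F_{-1}' > 0$ and $F_{-1}'' > 0$, so $\omega_{C,-1}$ is a positive form plus a nonnegative one; for $\lambda = 1$ the rank-one term is negative semidefinite, but $\omega_{C,1}$ is a rank-one modification of the positive form $F_1'\,\omega_C$ with strictly positive determinant by the previous step, which forces all its eigenvalues to be positive.

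Finally, for the tangent cone: near the apex $F_\lambda(r^2) = r^2 - \tfrac{\lambda}{2}r^4 + \cdots = r^2 + O(r^4)$, so pulling back by the holomorphic dilations $\phi_\epsilon$ of the $\C^*$-fibres, normalized so that $\phi_\epsilon^*(r^2) = \epsilon^2 r^2$ (hence $\phi_\epsilon^*\omega_C = \epsilon^2\omega_C$), and rescaling, $\epsilon^{-2}\phi_\epsilon^*\omega_{C,\lambda} = \tfrac{i}{2}\dd\big(r^2 - \tfrac{\lambda\epsilon^2}{2}r^4 + \cdots\big)\to\omega_C$ smoothly on compact subsets of $L^{\times}$ as $\epsilon\to 0$; since $\omega_C$ is scale-invariant, this is exactly the statement that the metric tangent cone of $\omega_{C,\lambda}$ at the apex is $\omega_C$. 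The only delicate point is the constant in the volume computation: the argument goes through precisely because the cone normalization makes the $\p(r^2)\wedge\op(r^2)$ coefficient cancel against the power of $(1+\lambda r^2)$ coming from $(F_\lambda')^{n+1}$ --- the same cancellation that also secures positivity when $\lambda = 1$. Everything else is routine bookkeeping as in Lemma~\ref{L1}.
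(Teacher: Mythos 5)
Your proof is correct and follows essentially the same route as the paper's: both reduce the Einstein condition to the volume-form identity \( \omega_{C,\lambda}^{n+1} = (1+\lambda r^2)^{-(n+2)}\,\omega_C^{n+1} \) via the cone identity \( |\nabla r^2|^2_{g_C} = 4r^2 \) (your \( g_C^{\alpha\bar\beta}\p_\alpha r^2\,\p_{\bar\beta}r^2 = r^2 \) is the same cancellation the paper performs with \( \frac{i}{2}\p r^2\wedge\op r^2\wedge\omega_C^n = \frac{r^2}{n+1}\omega_C^{n+1} \)), and both obtain the tangent cone from the expansion \( \log(1+x)=x+O(x^2) \). Your matrix-determinant-lemma phrasing and the explicit positivity/eigenvalue-interlacing check for \( \lambda=1 \) are minor additions that the paper leaves implicit, but the substance is identical.
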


\begin{proof}
	We show only the case \(\lambda=1\). It is straightforward to compute:
	\begin{equation*}
		\omega_{C, 1}^{n+1} = \frac{1}{(1+r^2)^{n+1}} \omega_C^{n+1} - \frac{n+1}{(1+r^2)^{n+2}} \frac{i}{2} \p r^2 \wedge \op r^2 \wedge \omega_C^n 
	\end{equation*}  
	We use the identity \( i \p f \wedge \op f \wedge \omega_C^n = 2^{-1}(n+1)^{-1} |\nabla f|^2 \omega_C^{n+1} \) applied to \( f=r^2 \) together with \(|\nabla (r^2)|^2 = 4 r^2\) to get \( \omega_{C, 1}^{n+1} = (1+r^2)^{-n-2} \omega_C^{n+1} \). We conclude that \( \mbox{Ric}(\omega_{C, 1}) = (n+2) \omega_{C, 1} \). The statement about the tangent cone follows by expanding \( \log (1 + x) = x + (h.o.t.) \). 
	
	Note that the above argument does not use the fact that we start from  a (quasi)-regular cone. Thus it works for irregular cones too. \end{proof}

The variety \(C(Y)\) has an isolated \(\mathbb{Q}\)-Gorenstein singularity at the apex of the cone, located at \(0\). The orbits of the \(\mathbb{C}^*\)-action are totally geodesic submanifolds isometric to \(dr^2 + \gamma^2 r^2 d\theta^2\), the standard cone of total angle \(2 \pi \gamma\).  We can compactify \(C(Y)\) to a projective variety \(\overline{C(Y)} \subset \mathbb{CP}^N\) by adding a divisor at infinity \( D_{\infty} \cong X \). The metric \( \omega_{C, 1} \) extends to \(\overline{C(Y)}\) with a cone angle \(2 \pi \gamma \) along \(D_{\infty}\). Since the Fano index of \(X\) is less or equal to \(n+1\) with equality only for the projective space, \( 0 < \gamma \leq  1 \) with \( \gamma =1 \) only if \(C(Y)\) is the flat \(\C^{n+1}\).

\subsection{Negative Einstein Cuspidal Ends}
 The next lemma constructs a cuspidal negative KE metric on the disc bundle of any polarized CY manifold $(X,L, \omega)$.
\begin{lemma} \label{cusplemma}
	Assume that \( \mbox{Ric}(\omega) \equiv 0 \). Let \(t = \log (- \log \rho^2) \) and on \( \Delta^*(L) = \{0<\rho<1\} \) define \( \omega_{cusp} = - i \dd t  \), then \( \mbox{Ric}(\omega_{cusp}) = - (n+2) \omega_{cusp} \), the associated metric is complete and it has the following expression   
	\begin{equation} \label{metric cusp}
	g_{cusp} = \frac{1}{2} dt^2 + e^{-t} (\pi^* g + 2 e^{-t} d \theta^2 ) . 
	\end{equation}
	In particular, the volume of \( \{ \rho \leq \epsilon \} \) is finite as long as \( \epsilon < 1 \).  Moreover, by writing  Equation \ref{metric cusp} as  \( g_{cusp} = (1/2) dt^2 +  2 \overline{g}_t \) with \( \overline{g}_t = \pi^* (e^{-t} g /2) + e^{-2t} d \theta^2  \),  we recognize \( \overline{g}_t \)  as the \(\eta\)-Sasaki-Einstein lift of \(g\) with \( \gamma_t = e^{-t} \).
\end{lemma}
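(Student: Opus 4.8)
The plan is to run everything through the substitution \(u = -\log\rho^{2}\), so that \(u>0\) precisely on \(\Delta^*(L)\), \(t=\log u\), \(e^{-t}=1/u\), and the two ends \(\rho\to 1\), \(\rho\to 0\) correspond to \(t\to-\infty\), \(t\to+\infty\). From \( i\dd\log\rho^{2} = \pi^*\omega \) we get \( i\dd u = -\pi^*\omega \), and from \( d\theta = \mbox{Im}(\p\log\rho^{2}) \) together with \( \mbox{Re}(\p\log\rho^{2}) = \tfrac12 d\log\rho^{2} = d\rho/\rho \) we get \( \p u = -(d\rho/\rho + i\,d\theta) \), hence \( i\,\p u\wedge\op u = 2\,(d\rho/\rho)\wedge d\theta \). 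Applying the identity \( \dd f = f\dd\log f + f^{-1}\p f\wedge\op f \) recalled in the Background with \(f=u\), i.e. \( \dd\log u = u^{-1}\dd u - u^{-2}\p u\wedge\op u \), yields
\[
\omega_{cusp} = -i\dd\log u = \tfrac1u\,\pi^*\omega + \tfrac1{u^{2}}\,i\,\p u\wedge\op u = e^{-t}\,\pi^*\omega + e^{-2t}\,i\,\p u\wedge\op u ,
\]
which is a genuine Kähler form: the first summand is positive on horizontal directions, the second, being of the form \( i\alpha\wedge\overline{\alpha} \) with \(\alpha=\p u\) non-vanishing on fibres, is semi-positive and positive in the vertical direction, and the sum has trivial kernel.

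To reach Equation \ref{metric cusp} I would convert this \((1,1)\)-form into its Riemannian metric via the polar relations \( I\,d\rho=-\rho\,d\theta \), \( I\,d\theta=\rho^{-1}d\rho \), exactly as in the passage from \(\omega_C\) to \(g_C\) in the Background: the form \( e^{-t}\pi^*\omega + 2e^{-2t}(d\rho/\rho)\wedge d\theta \) has associated metric \( e^{-t}\pi^*g + 2e^{-2t}\big((d\rho/\rho)^{2}+d\theta^{2}\big) \). Since \( dt = du/u \) and \( du = -2\,d\rho/\rho \) we have \( d\rho/\rho = -\tfrac12 e^{t}\,dt \), so \( 2e^{-2t}(d\rho/\rho)^{2} = \tfrac12\,dt^{2} \), and substituting gives precisely \( g_{cusp} = \tfrac12\,dt^{2} + e^{-t}(\pi^*g + 2e^{-t}d\theta^{2}) \). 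Rewriting this as \( g_{cusp} = \tfrac12 dt^{2} + 2\overline g_t \) with \( \overline g_t = \pi^*(e^{-t}g/2)+e^{-2t}d\theta^{2} \) and comparing with \( \overline g = \pi^*(\gamma g/2)+\gamma^{2}d\theta^{2} \) from Proposition \ref{P1} identifies \( \overline g_t \) with the \(\eta\)-Sasaki--Einstein lift of \(g\) for \( \gamma_t=e^{-t} \); this is the last assertion.

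For the Einstein equation I would repeat the computation of Lemma \ref{L1}. In the local coordinates \(z_1,\dots,z_n,w\) used there (with \(\rho^{2}=|w|^{2}e^{\phi\circ\pi}\), \(\omega=i\dd\phi\) a local potential, hence \(u=-\log|w|^{2}-\phi\)) one finds \( \omega_{cusp}^{\,n+1} = (n+1)\,e^{-(n+2)t}\,(\pi^*\omega)^{n}\wedge i\,\p u\wedge\op u \), and only the term \( i\,dw\wedge d\overline w/|w|^{2} \) of \( i\,\p u\wedge\op u \) survives the wedge with \((\pi^*\omega)^{n}\); thus, up to a positive constant, the volume form of \(\omega_{cusp}\) is \( e^{-(n+2)t}\,|w|^{-2}\det(g_{\alpha\overline\beta}) \) times the Euclidean volume form. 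Therefore
\[
\mbox{Ric}(\omega_{cusp}) = -i\dd\log\big(e^{-(n+2)t}|w|^{-2}\det(g_{\alpha\overline\beta})\big) = (n+2)\,i\dd t + i\dd\log|w|^{2} + \pi^*\mbox{Ric}(\omega) = -(n+2)\,\omega_{cusp},
\]
since \( i\dd\log|w|^{2}=0 \), \( \mbox{Ric}(\omega)\equiv 0 \) by the Calabi--Yau hypothesis, and \( i\dd t = -\omega_{cusp} \).

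Finally, completeness follows from the warped-product shape of Equation \ref{metric cusp}: the function \(t\) is proper on \(\Delta^*(L)\) — each slab \(\{a\le t\le b\}\) is a closed annular subregion of \(L\) fibred over the compact base \(X\), hence compact — and \( |\nabla t|^{2}_{g_{cusp}}\equiv 2 \) is constant, so any curve of finite \(g_{cusp}\)-length moves \(t\) by a bounded amount and stays in a compact set; hence every \(g_{cusp}\)-Cauchy sequence converges. For the volume statement, Equation \ref{metric cusp} shows that the \((2n+1)\)-dimensional slices \(\{t=\mbox{const}\}\) have Riemannian volume \(\propto e^{-(n+1)t}\), so the volume of \(\{\rho\le\epsilon\}\) is \( \propto \int_{\log(2\log(1/\epsilon))}^{\infty} e^{-(n+1)t}\,dt \), which is finite exactly when the lower endpoint \(\log(2\log(1/\epsilon))\) is, i.e. when \(\epsilon<1\) (for \(\epsilon=1\) the integration runs down to \(t=-\infty\) and diverges). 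The one genuinely error-prone step is bookkeeping the powers of \(2\) along \(\rho\rightsquigarrow u\rightsquigarrow t\) in the metric formula; the curvature identity and the completeness argument are then routine.
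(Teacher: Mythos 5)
Your proof is correct and follows essentially the same route as the paper's: expand \( -i\dd\log(-\log\rho^2) \) via the identity \( \dd f = f\dd\log f + f^{-1}\p f\wedge\op f \), identify \( i\p\log\rho^2\wedge\op\log\rho^2 = 2\rho^{-1}d\rho\wedge d\theta \), pass to the Riemannian metric with the polar relations, and read off \( \mbox{Ric}(\omega_{cusp}) = \pi^*\mbox{Ric}(\omega) - (n+2)\omega_{cusp} \) from the volume form in the coordinates \( (z,w) \). You merely add the (routine but welcome) explicit checks of positivity, completeness via properness of \( t \) with \( |\nabla t|^2 = 2 \), and the volume integral, which the paper leaves implicit.
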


\begin{proof}
	
We have  \[ \omega_{cusp} =  (\log \rho^2)^{-2} i \p \log \rho^2 \wedge \op \log \rho^2 - (\log \rho^2)^{-1} i \dd \log \rho^2 . \]	
Since \( i \p \log \rho^2 \wedge \op \log \rho^2 = 2 \rho^{-1} d \rho d\theta \), we get	\( \omega_{cusp} = e^{-t} \pi^* \omega + 2 e^{-2t} \rho^{-1} d \rho d \theta \). Using \( dt = (\rho \log \rho )^{-1} d \rho \), Equation \ref{metric cusp} follows.  We can also compute the volume form \( \omega_{cusp}^{n+ 1} = 2 e^{-tn-2t} \rho^{-1} (\pi^* \omega)^n d\rho d \theta = 2 e^{-t(n+2)} |w|^{-2} (\pi^* \omega)^n dw d\overline{w}\), from which
\( \mbox{Ric} (\omega_{cusp}) = \pi^{*}( \mbox{Ric}(\omega)) - (n+2) \omega_{cusp}\).	
\end{proof}
If we apply this ansatz to flat space \( (\mathbb{C}^n, \omega_{euc} ) \), \(L\) is trivialized by the section identically equal to \(1\) so \( \rho^2 = e^{|z|^2} |w|^2 \). We have \( \Delta^*(L) = \{e^{|z|^2} |w|^2 <1  \} \), \( \omega_{cusp} = - i \dd \log (- \log |w|^2 - |z|^2 ) \). By considering the covering map from the upper half plane to the punctured disc \( u \to w= e^{iu} \), we get \( \log |w|^2 = -2 \mbox{Im}(u) \) and thus \( \omega_{cusp} \) is a quotient of \( \mathbb{CH}^{n+1} \).

As a final comment, the constructions performed in this section can be carried with much more generality as in \cite{hwangsinger}.

\section{Metrics with conical singularities}

Let \(X\) be a compact complex manifold, \(D \subset X\) a smooth complex hypersurface and \( 0 < \beta < 1 \). There is a well established notion of a KE  metric on \(X\) with cone angle \(2\pi\beta\) along \(D\), see \cite{donaldsoncs} and \cite{JMR}. These are smooth KE metrics on the complement of \(D\) and around \( D\) there are continuous K\"ahler potentials \( \omega_{KE} = i \dd \varphi\). More specifically, \( \varphi \in C^{2, \alpha, \beta} \) and admits a polyhomogeneous expansion. The metric \(\omega_{KE}\) is smooth in tangential directions to \(D\) and in transverse directions its singularities are modeled on the standard cone of total angle \(2\pi\beta\). The model metric is \( g_{(\beta)} = |z_1|^{2\beta-2} |dz_1|^2 + \sum_{j=2}^{n} |dz_j|^2 \) and \( C^{-1} g_{(\beta)} \leq g_{KE} \leq g_{(\beta)} \) in local coordinates where \(D = \{z_1 =0\} \). 

If \( \omega_{KE}  \in 2 \pi c_1 (L^{-1})  \) then there is a  hermitian metric on \( L \) such that \( i \dd \log \rho^2 = \pi^* \omega_{KE} \), where \( \pi : L \to X \) is the bundle projection and \( \rho (p) = |p|_h \) is the norm function as before. The norm square \(\rho^2\) is smooth on the complement of \(\pi^{-1}(D)\) and extends continuously over it. More precisely, if \( p_0 \in L^{\times} \cap \pi^{-1}(D) \) then we can define local coordinates as follows: take \( z= (z_1, \ldots, z_n) \) complex coordinates on \(U\) centered at \( \pi (p_0) \) with \( D = \{z_1 =0\} \), let \(s\) be a non-vanishing holomorphic section of \(L\)  with \(s(0) = p_0 \) and  write points \(p \in \pi^{-1}(U) \) as \( p = w (p) s(\pi (p)) \) for some \(w(p) \in \C\). Our local coordinates are \( p \to (z(\pi (p)), w(p) ) \). We have \( H (z) = |s(z)|_h^2 \) and \( \rho^2 = |w|^2 |H(z)|^2 \). The function \( \varphi = \log |H(z)|^2 \in C^{2, \alpha, \beta}\)  is a local K\"ahler potential for \(\omega_{KE}\) belonging to \(C^{2, \alpha, \beta}\), and it admits a poly-homogeneous expansion. The same regularity holds for \(\rho^2 = |w|^2 e^{\varphi} \), and similarly for \(\rho^{2\gamma}\) and \( -\log (-\log \rho^2)\). 
Alternatively, we can also appeal to the notion of \(\beta\)-smooth functions, that is smooth real functions of the variables \((z_1, \ldots, z_n, |z_1|^{2\beta})\). Polyhomogeoneous functions are \(\beta\)-smooth and the composition of a \(\beta\)-smooth function with a smooth function \(\mathbb{R} \to \mathbb{R} \) is clearly  \(\beta\)-smooth

Same as before we have the global angular \(1\)-form, \( 	d \theta = \mbox{Im} (\partial \log \rho^2) \), smooth in the complement of \( \pi^{-1}(D) \). The Chern connection in the above coordinates is \( d + \p \varphi \), since \( \varphi \in C^{2, \alpha, \beta} \) we have \( | \p \varphi | = O (|z_1|^{\beta-1}) \). If \( c_{\epsilon} \) is a loop around \(\{z_1=0\}\) that shrinks to a point as \( \epsilon \to 0 \) then \( \lim_{\epsilon \to 0} \int_{c_{\epsilon}} \p \phi =0 \), that is the connection has no holonomy along small loops that go around \({z_1=0}\). 

It is possible to give a precise definition of sasakian metrics with conical singularities in real codimension \(2\) with transverse K\"ahler structure  modelled on \(g_{(\beta)}\). If \(  \mbox{Ric}(g_{KE}) = \mu g_{KE} \) on the complement of \(D\) then for any \( \gamma > 0 \), \(\overline{g} = \pi^*(\gamma g /2) + \gamma^2 d\theta^2 \) is a conical \(\eta\)-Sasaki-Einstein metric on \( Y = \{ \rho =1 \} \) with cone angle \( 2 \pi \beta \) along \(\pi^{-1}(D)\). The Reeb vector field is \( \xi = \gamma^{-1} V \), the contact form is \( \eta = \gamma d\theta \) and \( \mbox{Ric}_{\overline{g}} = ( (2/\gamma)\mu -2 ) \overline{g} + 2(n +1 - (\mu/\gamma)) \eta^2   \) on the complement of the conical set, as in Proposition \ref{P1}. We skip the details and proceed to analyse directly the corresponding Calabi-Yau cones and cuspidal ends.

\subsection{Cones}
We discuss Calabi-Yau cone metrics with conical singularities along a divisor invariant under the \(\C^*\)-action generated by the Reeb vector field. We refer to these metrics as log Calabi-Yau cones. More precisely in this section we construct regular log Calabi-Yau cones as lifts of conical K\"ahler-Einstein metrics on log Fano pairs. Our terminology differs from the one used by Chi Li, our log Calabi-Yau cones are referred to as log Fano cones in \cite{chiliextensions}. 
 
Let \(X\) be a Fano manifold of index \(I\) together with \(L \in \mbox{Pic}(X) \) such that \(L^I = K_X\) and let \( D \in |-d L| \), for some \(d>0\). Write \(Z\) for the total space of \(L^m\) and \(C(Y) \) for \(Z\) with the zero section contracted to a point via \( p: Z \to C(Y) \), so that \(Y\) is the set of unit vectors in \(L^m\).
There is a canonical local holomorphic volume form \(\Omega_0\) on \(C(Y)\). The divisor \(D\)  determines an holomorphic function \(F\) on \(C(Y)\) with simple zeros along \( p (\pi^{-1}(D)) \). Both \(\Omega_0\) and \(F\) are unique up to constant factors. If we write \(m_{\lambda}\) for the scalar multiplication by \(\lambda\) on the fibers of \(L^m\) then  \(m_{\lambda}^{*} \Omega_0 = \lambda^{I/m} \Omega_0 \) and \(m_{\lambda}^{*}F = \lambda^{d/m} F \).

Let \( \omega_{KE} \) be a KEcs on \((X, (1-\beta)D)\). We normalize by \([\omega_{KE}] = 2 \pi c_1 (-mL) \), so there is an hermitian metric \(h\) on \(mL\) with \( \pi^{*}\omega_{KE} = i \dd \log \rho^2 \) where \(\pi\) is the bundle projection and \(\rho^2\) is the norm square function. 
Define 
\begin{equation} \label{cone def}
\omega_C = \frac{i}{2} \dd r^2, \hspace{2mm} r^2 = \rho^{2\gamma}, \hspace{2mm} \mbox{with} \hspace{2mm} \gamma = \frac{d\beta - d + I}{m} \cdot \frac{1}{n+1}.
\end{equation}

\begin{proposition} \label{Calabi ansatz prop}
	\((C(Y), \omega_C)\) is a regular log Calabi-Yau cone with 
	\begin{equation} \label{vol eq}
	\omega_C^{n+1} = a \Omega \wedge \overline{\Omega}
	\end{equation}
	where \(a\) is a constant factor and $\Omega = F^{\beta-1} \Omega_0$. Its volume density is 
	\begin{equation} \label{voldensity}
	\nu =  \frac{\mbox{Vol}(\overline{g})}{\mbox{Vol}(S^{2n+1})} = \frac{I}{m} \left( \frac{d\beta-d+I}{I} \right)^{n+1} \frac{c_1(X)^n}{(n+1)^{n+1}}.
	\end{equation}
	
\end{proposition}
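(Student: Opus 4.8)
The plan is to reduce all three assertions to the earlier Calabi--ansatz computations, now applied to the conical metric $\omega_{KE}$, together with one local computation near $\pi^{-1}(D)$ identifying the volume form. First I would record the cohomological normalization that fixes the Einstein constant: a KEcs on the log Fano pair $(X,(1-\beta)D)$ with $[\omega_{KE}]=2\pi c_1(-mL)$ satisfies $\mathrm{Ric}(\omega_{KE})=\mu\,\omega_{KE}$ on $X\setminus D$, and passing to cohomology classes in the current identity $\mathrm{Ric}(\omega_{KE})=\mu\,\omega_{KE}+2\pi(1-\beta)[D]$, using $c_1(X)=-I\,c_1(L)$ and $c_1(\mathcal{O}(D))=-d\,c_1(L)$, forces $\mu=(I-d+d\beta)/m$. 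Therefore $\gamma(n+1)=\mu$: the exponent $\gamma$ of \eqref{cone def} is exactly the Ricci--flat value from Lemma \ref{L1}. The cone structure is then immediate, the identities $\omega_C=(\gamma\rho^{2\gamma}/2)\pi^{*}\omega_{KE}+\gamma^{2}\rho^{2\gamma-1}d\rho\,d\theta$ and $g_C=dr^{2}+r^{2}\bar g$ with $\bar g=\pi^{*}(\gamma g_{KE}/2)+\gamma^{2}d\theta^{2}$ being local over the base, hence valid verbatim; this exhibits $(C(Y)\setminus\{0\},g_C)$ as a Riemannian cone over the link $Y=\{\rho=1\}$, with $\bar g$ the conical $\eta$--Sasaki--Einstein lift of $g_{KE}$, with transverse cone angle $2\pi\beta$ along the $\C^{*}$--invariant divisor $p(\pi^{-1}(D))$, and regular since $Y$ is the unit circle bundle of the genuine line bundle $L^{m}$.

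The core of the argument is the identity \eqref{vol eq}, which I would check first on the smooth locus $C(Y)\setminus(\{0\}\cup p(\pi^{-1}(D)))\cong Z^{\times}\setminus\pi^{-1}(D)$, in the local coordinates $(z_1,\dots,z_n,w)$ of this section with $D=\{z_1=0\}$, $\rho^{2}=|w|^{2}e^{\varphi}$ and $\varphi$ a local potential of $\omega_{KE}$. Expanding $\omega_C^{n+1}$ as in the proof of Lemma \ref{L1} gives $\omega_C^{n+1}=c\,\rho^{2\gamma(n+1)}|w|^{-2}\,i\,dw\wedge d\overline{w}\wedge\pi^{*}\omega_{KE}^{n}$, while writing the conical K\"ahler--Einstein equation together with the bound $C^{-1}g_{(\beta)}\le g_{KE}\le g_{(\beta)}$ as a complex Monge--Amp\`ere equation, and using that $\varphi$ is polyhomogeneous, shows $\omega_{KE}^{n}=|z_1|^{2(\beta-1)}|q(z)|^{2}e^{-\mu\varphi}\,\mathrm{vol}_{\mathrm{eucl}}$ with $q$ holomorphic and nowhere vanishing. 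Since $\gamma(n+1)=\mu$, the factor $e^{\gamma(n+1)\varphi}e^{-\mu\varphi}=1$ cancels, leaving $\omega_C^{n+1}=c'\,|w|^{2\mu-2}|z_1|^{2(\beta-1)}|q(z)|^{2}\,\mathrm{vol}_{\mathrm{eucl}}$. On the other side, the $\C^{*}$--weights $m_\lambda^{*}F=\lambda^{d/m}F$ and $m_\lambda^{*}\Omega_0=\lambda^{I/m}\Omega_0$ force $F=w^{d/m}z_1\,v(z)$ and $\Omega_0=w^{I/m-1}b(z)\,dz_1\wedge\cdots\wedge dz_n\wedge dw$ with $v,b$ holomorphic and nowhere vanishing, so $\Omega\wedge\overline{\Omega}=|F|^{2(\beta-1)}\Omega_0\wedge\overline{\Omega_0}=|w|^{2\mu-2}|z_1|^{2(\beta-1)}|v|^{2(\beta-1)}|b|^{2}\,\mathrm{vol}_{\mathrm{eucl}}$, again because $(d\beta-d+I)/m=\mu$.

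Comparing, every singular factor (the $|z_1|^{2(\beta-1)}$ and $|w|^{2\mu-2}$) cancels, so $f:=\omega_C^{n+1}/(\Omega\wedge\overline{\Omega})$ extends to a smooth positive function on the whole coordinate chart, in particular across $\pi^{-1}(D)$. It has $\C^{*}$--weight zero, hence descends to a smooth positive function $\bar f$ on the compact manifold $X$ with $i\dd\log\bar f=0$ on $X\setminus D$ (equivalently $\mathrm{Ric}(\omega_C)=0$ off $p(\pi^{-1}(D))$, which also follows from Lemma \ref{L1}); being smooth, $i\dd\log\bar f\equiv0$ on all of $X$, so $\log\bar f$ is pluriharmonic on $X$ and $\bar f$ is a constant. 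This is \eqref{vol eq} on the smooth locus, and since $\beta>0$ and Calabi--Yau cones have finite volume near the apex both sides are $L^{1}_{\mathrm{loc}}$ on $C(Y)$, so the identity extends across $\{0\}\cup p(\pi^{-1}(D))$; in particular $(C(Y),\omega_C)$ is a regular log Calabi--Yau cone with holomorphic volume form $\Omega=F^{\beta-1}\Omega_0$ (multivalued, but $\Omega\wedge\overline{\Omega}$ an honest $L^{1}_{\mathrm{loc}}$ volume form). For the density I would use the cone structure to write $\nu=\mathrm{Vol}(Y,\bar g)/\mathrm{Vol}(S^{2n+1})=\mathrm{Vol}(\{r\le1\},g_C)/\mathrm{Vol}(B^{2n+2}(1))$ and compute $\int_{\{r\le1\}}\omega_C^{n+1}$ by Fubini along $\pi\colon Z\to X$: from $\omega_C^{n+1}=\mathrm{const}\cdot\gamma^{n+2}\rho^{2\gamma(n+1)-1}(\pi^{*}\omega_{KE})^{n}\wedge d\rho\,d\theta$, the fibrewise integral $\int_{\{\rho\le1\}}\rho^{2\gamma(n+1)-1}d\rho\,d\theta$ equals the constant $\pi/(\gamma(n+1))$ (finite since $\mu>0$), so $\int_{\{r\le1\}}\omega_C^{n+1}=\mathrm{const}\cdot\gamma^{n+1}[\omega_{KE}]^{n}$; substituting $[\omega_{KE}]=2\pi\tfrac{m}{I}c_1(X)$, $\mathrm{Vol}(B^{2n+2}(1))=\pi^{n+1}/(n+1)!$ with $\mathrm{Vol}(S^{2n+1})=2(n+1)\mathrm{Vol}(B^{2n+2}(1))$, and $\gamma=(d\beta-d+I)/(m(n+1))$ then gives \eqref{voldensity} after collecting the constants (and $\beta=1$ recovers the smooth-case density recorded after Lemma \ref{L1}).

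The step I expect to be the main obstacle is the local analysis near $D$ used in the second and third paragraphs: extracting the precise polyhomogeneous shape of $\omega_{KE}^{n}$ so that the K\"ahler--Einstein potential enters with exactly the exponent that cancels, and reading off the local forms of $\Omega_0$ and $F$ --- in particular handling the possibly fractional $\C^{*}$--weights $I/m$ and $d/m$, which strictly speaking requires passing to a cyclic cover or to the orbifold total space. Once the cancellation is established, the globalization and the volume count are routine.
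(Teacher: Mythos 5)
Your proposal is correct and arrives at all three assertions, but it proves the central identity, Equation \ref{vol eq}, by a genuinely different route. The paper works globally and intrinsically (in the case \(d=m=I\)): it writes the conical KE equation as \( i^{n^2}\tilde{s}\wedge\overline{\tilde{s}} = |\tilde{s}|_h^{2\beta}\omega_{KE}^n \), pulls this back to \(K_X\) to get \( |F|^{2\beta-2}\phi_0\wedge\overline{\phi}_0 = \rho^{2\beta}\pi^*\omega_{KE}^n \), and then obtains \( \omega_C^{n+1} = |F|^{2\beta-2} r^{-2}\phi_0\wedge\overline{\phi}_0\wedge\partial r\wedge\overline{\partial}r \) directly from \( \omega_C = r^2\pi^*\tilde{\omega}_{KE} + 2i\partial r\,\overline{\partial}r \) and the factorization \( \Omega_0 = \phi_0\wedge\partial\log r \); the constant \(a\) comes out explicitly and no globalization step is needed. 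You instead verify the identity in local coordinates near \(\pi^{-1}(D)\), using the local Monge--Amp\`ere form \( \omega_{KE}^n = |z_1|^{2\beta-2}|q|^2 e^{-\mu\varphi}\mathrm{vol} \) and the \(\C^*\)-weights of \(F\) and \(\Omega_0\) to see that the singular factors \(|z_1|^{2\beta-2}\) and \(|w|^{2\mu-2}\) cancel, and then globalize by a Liouville argument: the ratio is a smooth, \(\C^*\)-invariant, pluriharmonic function on the compact base, hence constant. Both arguments are sound; the paper's buys an exact global identity in one line at the cost of having set up the tautological form \(\phi_0\), while yours is more pedestrian but makes visible exactly why the exponent \(\gamma\) of Equation \ref{cone def} is forced (the cancellation \(e^{\gamma(n+1)\varphi}e^{-\mu\varphi}=1\)) and generalizes mechanically to the normal-crossing setting mentioned in the remarks. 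Your treatment of the fractional weights \(I/m\), \(d/m\) via covers is at the same level of rigor as the paper, which also only writes out \(d=m=I\). The density computation by fibrewise integration is essentially the paper's own (length \(2\pi\gamma\) of the circle fibres times \([\tilde{\omega}_{KE}]^n/n!\)), and your constants do reproduce Equation \ref{voldensity}.
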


Note that the number \(\gamma\) in Equation \ref{cone def} is determined by Equation \ref{vol eq} after pulling-back both sides of it by \(m_{\lambda}\). We can easily compute the Ricci curvature of \(\omega_{KE}\) from the equation of currents
\begin{equation*}
	c_1 (X) = \mu [\omega_{KE}] + (1-\beta) [D] .
\end{equation*}
Since \( c_1 (X) = I [-L] \), \( [\omega_{KE}] = m [-L] \) and \([D] = d [-L] \) we get \( I = \mu m + (1-\beta) d \). Hence \( \mu = (d \beta -d + I) m^{-1} \) and  \( \gamma = \mu (n+1)^{-1} \) as in Lemma \ref{L1}.
\begin{proof}
	For simplicity we consider the case \(d=m=I\). We have \(D \in |-K_X| \) and let \(s \in H^0(X, [D])\)  with \(s^{-1}(0)=D\). We have  \(\pi : K_X \to X \) and  \(Z\) is the total space of \(K_X\). There is a tautological holomorphic \((n, 0)\)-form \(\phi_0\) on \(Z\) given by \(\phi_0 (\theta) = \pi^{*} \theta \). We obtain a nowhere vanishing holomorphic volume form \(\Omega_0 = \partial \phi_0 \) so that \(K_Z \cong \mathcal{O} \). In local coordinates
	\begin{equation}
	\phi_0 = \xi dz_1 \ldots dz_n , \hspace{2mm} \Omega_0 = d\xi dz_1 \ldots dz_n
	\end{equation}
	where \(\xi\) is the fiber coordinate on \(K_X\) determined by the trivializing section \(dz_1 \ldots dz_n\). 
	Our  \(C(Y) \) is \(K_X\) with its zero section contracted to a point \(o\). Since \(X\) is Fano, \(C(Y)\) is an affine variety with an isolated Gorenstein singularity and the contraction map \( p : Z \to C(Y)  \) is identified with the blow-up of \(C(Y)\) at \(o\). The zero section of \(K_X\) is identified with the exceptional divisor \( E = p^{-1}(o)\) and \( p^{-1} (p(\pi^{-1}(D))) = E + \tilde{D} \) where \(\tilde{D} = \pi^{-1}(D) \) is the strict transform of \(p(\pi^{-1}(D))\).  
	The section \(s \in H^0(X, -K_X)\) determines an holomorphic function \(F : K_X \to \mathbb{C} \) given by \(F(\theta) = \theta (s(\pi(\theta))) \). Equivalently, if \(\tilde{s}\) denotes the dual section defined by \(\tilde{s}(s) \equiv 1 \) then \(\tilde{s}\) is a meromorphic \((n, 0)\)-form with a simple pole along \(D\) and \(\theta = F(\theta) \tilde{s}(\pi(\theta)) \) for any \(\theta \in K_X\). It is clear that \((F) = E + \tilde{D} \) and therefore \( [E] + [\tilde{D}] \cong \mathcal{O} \) in \(\mbox{Pic}(Z)\).  We write \(m_{\lambda}\) for the multiplication by \(\lambda \in \mathbb{C} \) on the fibers of \(K_X\), in local coordinates \(m_{\lambda} (\xi, z_1, \ldots, z_n) = (\lambda\xi, z_1, \ldots, z_n) \). Clearly \(m_{\lambda}^{*} F = \lambda F \) and \(m_{\lambda}^{*} \Omega_0 = \lambda \Omega_0 \).
	
	\(\omega_{KE}\) is a K\"ahler-Einstein metric of positive scalar curvature with cone angle \(2\pi\beta\) along \(D\). We normalize so that \( \omega_{KE} \in 2 \pi c_1(X) \).  The KE equation is
	\begin{equation} \label{KE eq}
	i^{n^2} \tilde{s} \wedge \overline{\tilde{s}} = |\tilde{s}|_h^{2\beta} \omega_{KE}^n .
	\end{equation}
	This implies \( \mbox{Ric}(\omega_{KE}) = \beta \omega_{KE} \) on the complement of \(D\). The cone \(\omega_C\) is given by Equation \ref{cone def} with  \(\gamma = \frac{\beta}{n+1}.\)

	We now prove Equation \ref{vol eq}. We use the identity \( \dd f = f \dd \log f + f^{-1} \partial f \wedge \overline{\partial} f \) with \(f=r^2\) to obtain \(	\omega_C = r^2 \pi^{*} \tilde{\omega}_{KE} + 2 i \partial r \overline{\partial} r \),
	with \(\tilde{\omega}_{KE} = (\gamma/2) \omega_{KE} \). The pull-back of the KE equation \ref{KE eq} to \(K_X\) is \( |F|^{2\beta-2} \phi_0 \wedge \overline{\phi}_0 = \rho^{2\beta} \pi^{*} \omega_{KE}^n \) and since \(\rho^{2\beta} = r^{2(n+1)} \) we have
	\[ \omega_C^{n+1}= r^{2n} (\pi^{*}\omega_{KE}^n) \wedge \partial r \wedge \overline{\partial} r = |F|^{2\beta-2} r^{-2} \phi_0 \wedge \overline{\phi}_0 \wedge \partial r \wedge \overline{\partial} r . \]
	Equation \ref{vol eq} follows since \( \Omega_0 = \phi_0 \wedge \partial \log r \).
	
	
	To compute the density, note that the total volume of the link is \( \mbox{Vol} (\overline{g}) = l \cdot [\tilde{\omega}_{KE}]^n / n! \) with \( l = 2 \pi \gamma \) the length of the circle fibers and \( [\tilde{\omega}_{KE}]^n = \pi^n \gamma^n c_1(X)^n \); we recall that \([\omega_{KE}] = 2 \pi c_1(-mL) \) and \( \tilde{\omega}_{KE} = (\gamma/2) \omega_{KE} \) so that \( \mbox{Ric} (\tilde{\omega}_{KE}) = 2 (n+1) \tilde{\omega}_{KE}\). The formula follows from the fact that the volume of the round \((2n+1)\)-sphere of radius \(1\) is \( \mbox{Vol}(S^{2n+1}) = (2/n!) \pi^{n+1} \). \end{proof}  

\begin{example}
	The simplest case is \(n=1\), \(d=m=I=2\),  \( X = \mathbb{CP}^1 \), \(D = \{N\} \cup \{S\} \), \( K_X^{\times} = \mathbb{C}^2 / \{\pm\} \) and it is easy to check that \(g_C  = \mathbb{C}_{\beta} \times \mathbb{C}_{\beta} / (\pm 1) \). More generally, the cones are flat in  complex dimension  \(2\): if the metric of the base is a positively curved metric on the Riemann sphere with conical singularities at a number of points, then it lifts via the Hopf map to a polyhedral K\"ahler (PK) metric on \(\C^2\) with conical singularities at the lines corresponding to the conical points, see \cite{panov}.
\end{example} 

\begin{example}
	Let \( F \in \C [z_1, \ldots, z_{n+1}]_d \) be homogeneous  with \( D=\{F=0\} \subset \mathbb{CP}^n \) a smooth degree \(d\) hypersurface. Assume that there is a KEcs \(\omega_{KE}\) with positive scalar curvature on \( (\mathbb{CP}^n, (1-\beta)D) \). We normalize by \( \omega_{KE} \in c_1 (\mathcal{O}(1)) \). The hermitian metric on \(\mathcal{O}(-1)\) induces a continuous norm squared function \( \rho^2 : \mathbb{C}^{n+1} \to \mathbb{R}_{\geq 0} \) with \( \rho^2 (\lambda z) = |\lambda|^2 \rho^2(z) \) where \(\lambda z = (\lambda z_1, \ldots, \lambda z_{n+1})\). We let \( \omega_C = (i/2) \dd \rho^{2\gamma} \) and \( \Omega = F^{\beta-1} dz_1 \ldots dz_{n+1} \). If we let \( \gamma = (n+1)^{-1} (n+1+d\beta-d) \) then \(\omega_C^{n+1}\) is equal to \( \Omega \wedge \overline{\Omega} \), up to a  constant factor.	
	
\end{example}

Some remarks:

\begin{itemize}
	\item  The fundamental group of \(X \setminus D \) is generated by loops that go around \(D\). The connection has curvature \(d(d\theta) = \pi^{*} \omega_{KE} \) and is uniquely determined (up to gauge equivalence) by the fact that its holonomy along loops around \(D\) that shrink to a point is trivial.

	\item We can extend  Proposition \ref{Calabi ansatz prop} to a more singular setting, as simple normal crossing divisors for example. If \( (X, \sum_{j=1}^{N} (1-\beta_j) D_j) \) is a Fano klt pair endowed with a weak KEcs metric in the class \( 2 \pi c_1(-mL) \) then we set \( \gamma = (n+1)^{-1} m^{-1} (I + \sum_{j=1}^{N} (d_j \beta_j -d_j) ) \) in Equation \ref{cone def} where \( D_j \in |-d_j L| \).
	
	\item We can also compactify a Ricci-flat cone by considering \( i \dd \log (1 + r^2) \) and creating a conical angle at the divisor at infinity. If we consider \(d\) points on the Riemann sphere and \( \beta_1, \ldots, \beta_d \in (0, 1) \) satisfy the Troyanov conditions we can lift the corresponding spherical metric to a PK cone metric on \(\C^2\) with cone angle \(2\pi \beta_j\) along the respective \(d\) lines going through the origin. The compactified space is \(\mathbb{CP}^2\) and the compactified metric has positive constant holomorphic sectional curvature and cone angle \(2 \pi \gamma \) along the line at infinity, where \( 2 \gamma = 2 + \sum_{j=1}^{d} (\beta_j -1) \). We can take the cone over the line arrangement and the compactify again to get a constant positive holomorphic sectional curvature log metric on \(\mathbb{CP}^3\) and so forth. The conical hyperplane arrangements obtained in this way are log \(K\)-polystable according to \cite{fujita}.
	
\end{itemize}

\subsection{Cusps}

Let \( D \subset X \) a smooth divisor and \( \omega_{KE} \) a Ricci-flat K\"ahler metric on \(X\) with cone angle \(2\pi\beta\) along \(D\). We assume \( \omega_{KE} \in 2 \pi c_1 (-L) \) and denote by \(\rho^2\) the norm squared continuous function on the total space of \(L\) such that \( i \dd \log \rho^2 = \pi^{*}\omega_{KE} \) where \( \pi : L \to X\) is the projection map. Let \(t = \log (- \log \rho^2) \) and on \( \Delta^*(L) = \{0<\rho<1\} \) define \( \omega_{cusp} = - i \dd t  \).

\begin{proposition}\label{KEcusps}
	 \(\omega_{cusp}\) is a K\"ahler metric on \(\Delta^*(L)\) with cone angle \(2\pi\beta\) along \(\pi^{-1}(D)\) and \( \mbox{Ric}(\omega_{cusp}) = - (n+2) \omega_{cusp} \) on the complement of the conical set. The metric completion is homeomorphic to \(\Delta^*(L)\) and the volume of \( \{ \rho \leq \epsilon \} \) is finite as long as \( \epsilon < 1 \).
\end{proposition}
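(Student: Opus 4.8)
The plan is to mimic the proof of Lemma~\ref{cusplemma} almost verbatim for the local Kähler and Einstein assertions; the genuinely new content is the transverse conical behaviour along $\pi^{-1}(D)$ and the identification of the metric completion. First, putting $u=-\log\rho^{2}$ so that $t=\log u$, one finds exactly as in Lemma~\ref{cusplemma} that $\omega_{cusp}=-i\dd t=e^{-t}\pi^{*}\omega_{KE}+2e^{-2t}\rho^{-1}d\rho\,d\theta$, using $i\dd\log\rho^{2}=\pi^{*}\omega_{KE}$ and $i\,\p\log\rho^{2}\wedge\op\log\rho^{2}=2\rho^{-1}d\rho\,d\theta$, with associated Riemannian metric $g_{cusp}=\frac{1}{2} dt^{2}+e^{-t}(\pi^{*}g+2e^{-t}d\theta^{2})$ as in Equation~\ref{metric cusp}. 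On $\Delta^{*}(L)\setminus\pi^{-1}(D)$ we have $e^{-t}>0$ and $\pi^{*}g>0$, so $\omega_{cusp}$ is a genuine Kähler metric there; and the volume-form computation of Lemma~\ref{cusplemma}, being local in a trivializing fibre coordinate $w$, is unchanged: $\omega_{cusp}^{n+1}=c\,e^{-(n+2)t}|w|^{-2}(\pi^{*}\omega_{KE})^{n}\wedge\frac{i}{2}\,dw\wedge d\overline{w}$ for a positive constant $c$, whence $\mbox{Ric}(\omega_{cusp})=\pi^{*}\mbox{Ric}(\omega_{KE})-(n+2)\omega_{cusp}$; since $\mbox{Ric}(\omega_{KE})\equiv0$ on $X\setminus D$ this gives $\mbox{Ric}(\omega_{cusp})=-(n+2)\omega_{cusp}$ off $\pi^{-1}(D)$.

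Next, for the cone angle along $\pi^{-1}(D)$, fix $p_{0}\in\pi^{-1}(D)$ with $0<\rho(p_{0})<1$, so that $w(p_{0})\ne0$, and work in the adapted local coordinates $(z_{1},\dots,z_{n},w)$ introduced earlier in this section: $\pi^{-1}(D)=\{z_{1}=0\}$, $\rho^{2}=|w|^{2}e^{\varphi}$ with $\varphi=\log|H(z)|^{2}\in C^{2,\alpha,\beta}$ polyhomogeneous, and on a small enough neighbourhood both $\rho<1$ and $w\ne0$. There $u=-\log|w|^{2}-\varphi$ is bounded between positive constants and $t=\log u$ is a bounded $\beta$-smooth function of $(z_{1},\dots,z_{n},w,|z_{1}|^{2\beta})$, so $\omega_{cusp}=-i\dd t$ is a $\beta$-smooth, polyhomogeneous $(1,1)$-form. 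Combining $g_{cusp}=\frac{1}{2} dt^{2}+e^{-t}\pi^{*}g+2e^{-2t}d\theta^{2}$ with the defining bound $C^{-1}g_{(\beta)}\leq g\leq g_{(\beta)}$ on $X$, the fact that $w$ stays away from $0$ near $p_{0}$ (so the fibre directions contribute a term uniformly comparable to $|dw|^{2}$), and the estimate $|\p\varphi|=O(|z_{1}|^{\beta-1})$ (so that $\p\varphi$ and $d\varphi$ are $g_{(\beta)}$-bounded $1$-forms), one obtains $C^{-1}g_{(\beta)}\leq g_{cusp}\leq C g_{(\beta)}$ with $g_{(\beta)}=|z_{1}|^{2\beta-2}|dz_{1}|^{2}+\sum_{j=2}^{n}|dz_{j}|^{2}+|dw|^{2}$ now on $\C^{n+1}$; hence $\omega_{cusp}$ has cone angle $2\pi\beta$ along $\pi^{-1}(D)$. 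I expect this to be the main obstacle: the point is that the ``$\log$--$\log$'' potential $t$ does not worsen the transverse conical model, and this hinges on the polyhomogeneous ($\beta$-smooth) regularity of $\varphi$.

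For the metric completion, observe first that, by the previous paragraph, near $\pi^{-1}(D)$ the length metric of $g_{cusp}$ is bi-Lipschitz to that of the flat model cone of angle $2\pi\beta$ in $\C^{n+1}$ with its (real) codimension-two axis $\{z_{1}=0\}$ removed; the completion of the latter adds back the axis and is homeomorphic to $\C^{n+1}$, so completing $(\Delta^{*}(L)\setminus\pi^{-1}(D),g_{cusp})$ adds back exactly $\pi^{-1}(D)\cap\Delta^{*}(L)$. No further points appear: since $g_{cusp}\geq\frac{1}{2} dt^{2}$, a curve along which $\rho\to0$ (resp.\ $\rho\to1$) has $t\to+\infty$ (resp.\ $t\to-\infty$) and hence infinite length, so a Cauchy sequence cannot approach $\{\rho=0\}$ or $\{\rho=1\}$, and since $X$ is compact it cannot escape in the base directions either. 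Thus the completion is homeomorphic to $\Delta^{*}(L)$.

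Finally, for finiteness of $\mbox{Vol}\{\rho\leq\epsilon\}$ with $\epsilon<1$: by the volume-form formula above and Fubini, this volume equals, up to the constant $c$, the integral over $X$ against $(\pi^{*}\omega_{KE})^{n}$ of the fibre integrals $\int_{\{|w|^{2}e^{\varphi(z)}\leq\epsilon^{2}\}}e^{-(n+2)t}|w|^{-2}\,\frac{i}{2}\,dw\wedge d\overline{w}$. Since $\varphi$ is bounded, the fibre disc has radius comparable to $\epsilon<1$; writing $|w|=\sigma$, so that $e^{-t}=(-2\log\sigma-\varphi)^{-1}=O(|\log\sigma|^{-1})$ as $\sigma\to0$, the fibre integrand is $O(\sigma^{-1}|\log\sigma|^{-(n+2)})$, which is integrable near $\sigma=0$ because $n+2>1$, with a bound uniform in $z$; and $\int_{X}(\pi^{*}\omega_{KE})^{n}=[\omega_{KE}]^{n}$ is finite. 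Hence the total volume is finite.
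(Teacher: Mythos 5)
Your proof is correct and takes essentially the same route as the paper: the K\"ahler and Einstein computations are imported from Lemma \ref{cusplemma} on the complement of the conical set, and the conical behaviour, the identification of the metric completion, and the volume finiteness all reduce to the $\beta$-smooth (polyhomogeneous) regularity of the potential $t=\log(-\log\rho^2)$, which the paper dispatches in a single sentence and you verify in detail. No gaps.
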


\begin{proof}
	On the complement of the conical set the proof of Lemma \ref{cusplemma} shows that
	\begin{equation*} 
	g_{cusp} = \frac{1}{2} dt^2 + e^{-t} (\pi^* g + 2 e^{-t} d \theta^2 )  
	\end{equation*}
	where \(d\theta\) is the global angular form determined by the hermitian metric on \(L\). Similarly, \( \mbox{Ric}(\omega_{cusp}) = \pi^* (\omega_{KE}) - (n+2) \omega_{cusp} \). The statements follow as the potential \(t\) has the desired regularity.
\end{proof}

It is also possible to extend the proposition to a more general setting, as simple normal crossing conical divisors for example, without any changes in the formulas.

\subsection{From cones to cusps}

A nice feature of the conical setting is that we can fix the ambient complex manifold and the divisor and change the conical angle to \emph{continuously change from negative to zero and positive Ricci}. 
We take \( \beta^* \) such that \( (X, (1-\beta)D) \) is log Fano if \( \beta > \beta^* \) and log Calabi-Yau if \( \beta = \beta^* \). We assume that there is a corresponding family of KEcs metrics \( g_{\beta} \) in a fixed polarization, and we denote the norm function of the hermitian metrics by \( \rho_{\beta} \) the corresponding lifted Ricci-flat cones are \( \omega_{C}(\beta) = (i/2) i \dd r_{\beta}^2 \) with \( r_{\beta}^{2} = \rho^{2  \gamma(\beta) }_{\beta} \).  The constant \( \gamma(\beta) \) depends linearly on \( \beta \) and \(  \gamma(\beta) \to 0 \) as \( \beta \to \beta^* \). To see a  cusp as \(\beta\) decreases to \(\beta^*\),  we modify the cones so that they have constant negative Ricci. Following Lemma \ref{L2},  the metrics \(  -i \dd \log (1 - r_{\beta}^2) \) are Einstein metrics of negative scalar curvature whose tangent cone at \(o\) is \(\omega_{C}  (\beta)\). The key is the elementary limit \( \lim_{\epsilon \to 0} \frac{1- \lambda^{\epsilon}}{\epsilon} = - \log \lambda \) which implies that \( \lim_{\beta \to \beta^*} \gamma(\beta)^{-1} (1- \rho^{2 \gamma(\beta)}_{\beta}) = - \log \rho^2_{\beta*} \), and therefore
\begin{equation} \label{conetocusp}
	\lim_{\beta \to \beta^*} -i \gamma(\beta)^{-1} \dd \log (1 - r_{\beta}^2) = \omega_{cusp}
\end{equation}
where \( \omega_{cusp} = - i \dd \log (- \log \rho_{\beta^*}^2 ) \).

For example, if we fix three points in the Riemann sphere then there is a continuous one parameter family of constant scalar curvature metrics \(g_{\beta}\) with cone angle \(2\pi\beta\) at the given points, the curvature is negative if \( 0 < \beta < 1/3 \),  zero if \(\beta = 1/3\) and  positive if \( 1/3 < \beta <1 \). Their sasakian lifts  form  a continuous family of \(S^1\)-invariant metrics \( \overline{g}_{\beta} \) on the \(3\)-sphere with \( \tilde{SL}(2, \mathbb{R}), \mbox{Nil} \) and spherical geometry as the cone angle \( 2 \pi \beta \) along the three Hopf circles varies. We have for \( 1/3 < \beta < 1 \) complex hyperbolic metric in \(\C^2\) with cone angle \(2\pi\beta\) along three lines through the origin, the corresponding tangent cones are PK cones with spherical link. The family converges to a metric with a cuspidal point at the origin as \( \beta \to 1/3\), corresponding to the Nil metric on the \(3\)-sphere. To continue the family beyond the cusp one must blow up the origin and consider metrics with a sharp conical angle along the exceptional divisor, as we shall see in the next section.

\subsection{Quasi-regular and irregular cases}
The Calabi ansatz can be extended to lift KEcs metrics on orbifolds, same as the smooth case. The prototypical example is the case of the cusp \( \{ w^2 = z^3 \} \subset \C^2 \) in the range \( 1/6 < \beta < 5/6 \), we obtain a PK cone metric as a lift of a spherical metric with only one conical singularity on the orbifold \( \mathbb{CP}(2, 3) \). Same applies to the more general case of \(\eta\)-Sasaki-Einstein metrics, the sasakian manifold has the structure of a Seifert fibered space. On \( \mathbb{CP}^1 \) we have a \(1\)-parameter family of  constant scalar curvature metrics \(g_{\beta}\), \( 0 < \beta < 5/6 \), with fixed volume and three conical points of angle \(\pi\), \(2\pi/3\) and \(2\pi\beta\). The scalar curvature is negative if \( 0 < \beta < 1/6 \), zero if \( \beta = 1/6 \) and positive if \( 1/6 < \beta < 5/6 \). We can lift the family with a Seifert map to  metrics \( \overline{g}_{\beta} \) on the \(3\)-sphere with cone angle \(2\pi\beta\) along the trefoil knot with \(\tilde{SL}(2, \mathbb{R})\), Nil and spherical geometry according to the angle. 

\begin{example}
	
	It is shown by Li-Sun \cite{lisun} that if \(\beta \in (1/4, 1) \) then \(\mathbb{CP}^2\) has a KEcs with cone angle \(2\pi\beta\) along a smooth conic. By Proposition \ref{Calabi ansatz prop} we can lift it to a (regular) log CY cone metric \(g_{CY}\) on \(\mathbb{C}^3\) with angle \( 2\pi \beta\) along \(\{z_1^2 + z_2^2 + z_3^2 =0 \}\). On the other hand we have a branched cover from the \(A_2\)-singularity \(\{z_1^2 + z_2^2 + z_3^2 + z_4^3 =0 \} \subset \C^4 \) to \(\C^3\) given by \( (z_1, \ldots, z_4) \to (z_1, z_2, z_3) \). The pull-back of \(g_{CY}\) is a (quasi-regular) log CY  cone metric on the \(A_2\)-singularity with cone angle \(2 \pi \tilde{\beta} \) with \(\tilde{\beta} = 3 \beta\) along \(\{z_4=0\}\). In particular, the pull-back is a smooth CY cone metric when \(\beta=1/3\).
	
\end{example}

We expect to be plenty of irregular examples, e.g., in the toric setting as in \cite{futakionowang}. The case of cuspidal ends corresponds to \(\eta\)-Sasaki-Einstein metrics whose transverse geometry is log Calabi-Yau; there does not seem to be any obstruction to solve the Monge-Amp\`ere equation in this setting, since the transverse geometry is Calabi-Yau as in  \cite{elkacimi}.

\section{Constant holomorphic sectional curvature log metrics}

Some precedents of what we are going to discuss are the following:

\begin{itemize}
	\item Polyhedral K\"ahler manifolds, introduced by Panov \cite{panov}. These are polyhedral spaces with unitary holonomy. One difficulty in adapting this notion to the non-zero curvature case is the lack of totally geodesic real hypersurfaces in \(\mathbb{CP}^n\) and \( \mathbb{CH}^n \); these are replaced by extors, see \cite{goldman}. It is also not clear that the complex structure defined on the smooth locus extends, so far this has only been proved for \(\dim_{\mathbb{C}} =2 \).
	
	\item  \((X, G)\)-cone manifolds, introduced by Thurston \cite{thurston}. This is a much broader concept. \(X\) is a homogeneous space and \(G\) is a subgroup of the isometry group, an \((X, G)\)-manfold is given by charts to \(X\) with transition functions in \(G\). The notion of an \((X, G)\)-cone manifold is worked inductively. The result is a stratified space, locally isometric to \(X\) in the smooth locus and with totally geodesic strata. Even if \(G\) is contained in a unitary group, it is not always the case that there is a global complex structure. 
\end{itemize}

We take a \emph{complex view-point} and we always restrict to the case of angles less than \(2\pi\). We start with a log-canonical pair \( (X, \sum_{j=1}^{N} (1- \beta_j)D_j ) \) and we want to define the notion of a constant holomorphic sectional curvature K\"ahler metric with cone angle \(2\pi\beta_j\) along \(D_j\). In the complement of the divisors we want a smooth metric locally isometric in complex coordinates to the respective model \( \mathbb{CH}^n, \mathbb{C}^n, \mathbb{CP}^n \). The point is to give models around the singular locus, this imposes strong restrictions on the pair. 

The notion when \( \dim_\C =1 \) is clear, we have the models \( -i \dd \log (1 - |z|^{2\beta}) \), \(i \dd |z|^{2\beta}\), \(i \dd \log (1 + |z|^{2\beta}) \) for the conical points and \( - i \dd \log (- \log |z|^2) \) for the cusps. We proceed inductively, constructing local model metrics as lifts of positive and zero constant holomorphic curvature conical (orbifold) metrics via the Calabi ansatz. 

We assume first that \( 0 < \beta_j < 1 \) and that the pair is klt, to avoid the cuspidal behaviour. It is enough to describe the tangent cones, which must be  polyhedral K\"ahler (PK) cones. For example, at the simple normal crossing locus of the support of \(D\) written in local coordinates as \(\{z_1 \ldots z_k =0\}\) the tangent cone must be \(\C_{\beta_1} \times \ldots \times \C_{\beta_k} \times \C^{n-k} \) and the corresponding non-zero constant holomorphic sectional curvature models are:  
\begin{equation*}
	- i \dd \log (1 - (\sum_{j=1}^{k}|z_j|^{2\beta_j} + \sum_{j=k+1}^{n}|z_j|^2) ),  \hspace{2mm} i \dd \log (1 + \sum_{j=1}^{k}|z_j|^{2\beta_j} + \sum_{j=k+1}^{n}|z_j|^2) .
\end{equation*}
Any irregular cone splits as \(\C_{\beta} \times C(Y) \) with  \(C(Y)\) a lower dimensional PK cone, see \cite{panov}. Otherwise the cone is quasi-regular and is the lift of a constant positive holomorphic curvature metric.  If \( i \dd r^2 \) is a PK cone, then \( - i \dd \log (1-r^2) \) and \(i \dd \log (1+r^2) \) are the constant holomorphic sectional curvature models which have the given PK cone as a tangent cone. We conclude that in the klt setting is enough to specify the tangent PK cone at any point, and these can be described inductively. Finally, one can include cuspidal points in the same way. To our knowledge, there do not seem to be compact examples of constant curvature and non-isolated cuspidal singularities.

\subsection{Complex hyperbolic metrics on \(\mathcal{M}_{0, n+3}\)}
We recall the following construction from Section 4 in \cite{delignemostow}.  Let \( \mu_1, \ldots, \mu_{n +3} \in (0, 1) \) be such that \( \sum_{j=1}^{n+3} \mu_j =2 \). Let \(M^{st} \subset (\mathbb{CP}^1)^{n+3} \) be the set of \( (x_1, \ldots, x_{n+3}) \) such that \( \sum_{i: x_i =z} \mu_i < 1 \) for all \( z \in \mathbb{CP}^1 \). Note that \( M^{st} \) contains the set \( M_0 \) where \( x_i \neq x_j \) for \( i \neq j \). The group of M\"obius transformations \( PSL(2, \mathbb{C}) \) acts  diagonally on \(M^{st}\) and we define \( X^{st} = M /PSL (2, \mathbb{C}) \). Define \(X^{ps}\) as the set of partitions into two disjoint sets \(B_1 \sqcup B_2 = \{1, \ldots, n+3\} \) with \( \sum_{j \in S_1} \mu_j = \sum_{j \in S_2} \mu_j =1 \).  Let \( X = X^{st} \sqcup X^{ps} \),  \(X^{st}\) is the set of the stable points and \(X^{ps}\) is the strictly polystable locus. The space \(X\) is a compact complex variety of complex dimension \(n\), it is a smooth complex manifold on \(X^{st}\) and is locally biholomorphic to the cone over the Segre embedding of \( \mathbb{CP}^{|B_1| -2} \times \mathbb{CP}^{|B_2|-2} \) into \( \mathbb{CP}^{(|B_1|-1)(|B_2-1)-1} \) at \(X^{ps}\). In particular, it is smooth if either \( |B_1| =2 \) or \( |B_2| =2 \). Each \( \{i, j\} \subset \{1, \ldots, n+3\}  \)  such that \( \mu_i + \mu_j < 1 \) defines a complex hypersurface \( D_{ij} \subset X \) equal to the projection of \( \{ x_i = x_j \} \). Note that \( X \setminus \cup_{i, j} D_{ij} \) is biholomorphic to the space of configurations of  \emph{ordered} \((n+3)\)-tuples of distinct points in the projective line \( \mathcal{M}_{0, n+3} := M_0 / PSL (2, \mathbb{C}) \). We denote by \(D_{ij}^{\times}\) the set of `generic' points on the divisor, that is \( D_{ij} \setminus \cup_{\{k, l\}\neq \{i, j\} } D_{k l} \).  We have \( D_{ij}^{\times} \cong \mathcal{M}_{0, n+2} \) and \(D_{ij} \cap X^{st} \) is smooth as it corresponds to the set of stable points when the weights \(\mu_i\) and \(\mu_j\) are replaced by the single \( \mu_i + \mu_j \) weight. The pair \( (X, \sum_{i < j} (\mu_i + \mu_j) D_{ij} ) \) is log-canonical and is klt precisely on the complement of the finite set \( X^{ps} \), moreover \( K_X + \sum_{i<j} (\mu_i + \mu_j) [D_{ij}] > 0 \). 
When all \(\mu_j\) are rational  \(X\) agrees with a weighted GIT quotient.

Each \((x_1, \ldots, x_{n+3}) \in \mathcal{M}_{0, n+3} \) defines a unique up to scale a compatible flat metric on \(\mathbb{CP}^1\) with cone angle \(2\pi(1-\mu_j)\) at \(x_j\). The space \(\mathcal{M}_{0, n+3}\) is identified with the space of polyhedral metrics of fixed area on the \(2\)-sphere with \(n+3\) marked vertices of prescribed conical angles up to orientation preserving isometries that respect the markings. The polyhedral metrics induce a natural Weil-Petersson metric on \(\mathcal{M}_{0, n+3}\) which turns out to be complex hyperbolic (compare Section $2$ in \cite{delignemostow}). More precisely, in local complex coordinates we map \( z= (z_1, \ldots, z_{n}) \to (z_1, \ldots, z_{n}, 0, 1, \infty) \) and let
\begin{equation}
	\Omega_z = t^{-\mu_{n+1}} (t-1)^{-\mu_{n+2}} \prod_{j=1}^{n} (t-z_j)^{-\mu_j} dt ,
\end{equation}
which extends as a locally defined meromorphic one-form on \(\mathbb{CP}^1\) with a pole of order \(  \mu_{n+3} \) at \(\infty\). The Weil-Petersson metric is \( \omega_{WP} =- i \dd \log \int_{\mathbb{CP}^1} i \Omega_z \wedge \overline{\Omega_z} \). The local K\"ahler potentials extend continuously to \( X^{st} \) and \(\omega_{WP}\) extends with conical singularities of cone angle \( 2 \pi (1- \mu_i - \mu_j) \) along the generic boundary points  \( D_{ij}^{\times} \), see Proposition $3.5$ in  \cite{thurston}. 

The collision of \( k + 1 \) points defines a strata of complex codimension \(k\). We first want to examine the case of maximal codimension equal to \(n\), this corresponds to a point in \( p \in X\) representing the collision of \(n+1\) points. Consider first the case that \( p \in X^{st} \) and without loss of generality we assume that \( \sum_{j=1}^{n+1} \mu_j < 1 \). We must then have \( \mu_{n+2} + \mu_{n+3} > 1 \) and the point \(p\) represents the double of an euclidean triangle with interior angles \( \pi (1 - \sum_{j=1}^{n+1}\mu_j )\),  \(\pi (1- \mu_{n+2})\), \( \pi (1-\mu_{n+3} ) \). We have \( \binom{n+1}{2} \) smooth divisors that intersect pairwise transversely at \(p\), we can choose coordinates to \( \mathbb{C}^{n} \) centered at \(p\) in a way that each divisor is represented by a complex hyperplane going through the origin. This defines a (rigid) hyperplane arrangement \( \mathcal{H} = \cup_{i, j} H_{ij} \) in \(\mathbb{CP}^{n-1}\): given \(n+1\) points in general position in \(\mathbb{CP}^{n-1}\), \(H_{ij}\) is the hyperplane that goes through all the points but not through the \(i\)-th and \(j\)-th. When \( n=2\) this is just three points in the Riemann sphere, when \( n=3 \) the arrangement is known as the complete quadrilateral. The pair \( ( \mathbb{CP}^{n-1}, \sum_{i < j} (\mu_i + \mu_j) H_{ij} ) \) is log-Fano and by comparing with Section $3$ of \cite{thurston}, it gives a metric with positive constant holomorphic sectional curvature with cone angle \( 2 \pi (1 - \mu_i - \mu_j) \) along \(H_{ij}\). The tangent cone of \( \omega_{WP} \) at \(p\) is the polyhedral K\"ahler cone with the above metric as a complex link.

More generally the tangent cones at points in \(X^{st}\)  decompose as products and can be described inductively. For example, if \( \{i, j\} \cap \{k, l\} = \emptyset \) and \( \mu_i + \mu_j < 1, \mu_k + \mu_l < 1 \) then \( D_{ij} \cap D_{kl} \) is a complex codimension two variety and the tangent cone at the generic points \( (D_{ij} \cap D_{kl})^{\times} \) is \( \C_{1- \mu_i - \mu_j} \times \C_{1-\mu_k - \mu_l} \times \C^{n-2} \).   More systematically, following \cite{mcmullen} we let \(\mathcal{P}= \sqcup_{\alpha}B_{\alpha} \) be a partition of \( \{1, \ldots, n+3\} \) such that \( \sum_{j \in B_{\alpha}} \mu_j < 1 \) for each \(\alpha\). Such a partition defines  \( X^{\mathcal{P}} \subset X\) given by the collision of the points labeled by the sets \(B_{\alpha}\). Note that \(|\mathcal{P}| \geq 3\) and \(X^{\mathcal{P}}\) is a point if \( |\mathcal{P}|=3 \), representing the double of an euclidean triangle. The strata are totally geodesic submanifolds and the induced metric agrees after identifying \( X^{\mathcal{P}} \cong \mathcal{M}_{0, |\mathcal{P}|} \) with the complex hyperbolic metric given by the weights \( \sum_{j \in B_{\alpha}} \mu_j \). For each \(\alpha\) we have a constant positive holomorphic sectional curvature metric on \(\mathbb{CP}^{|B_\alpha|-2}\) that lifts to a PK cone metric \(g_{\alpha}\) on \(\C^{|B_{\alpha}|-1}\). 

We summarize the above discussion in the following:

\begin{proposition} \label{posandpk} 
	Let \( \{v_1, \ldots, v_{d+2}\} \subset \mathbb{CP}^d \) be in general position and let \(\{\mu_k\}_{k=1}^{d+2}\) with $0 < \mu_k< 1$. Define
	\[H_{ij} = \mbox{span} \{v_1, \ldots, \hat{v_i}, \hat{v_j}, \ldots, v_{d+2} \}. \]
	We have:
	\begin{enumerate}
		
		\item If \( \sum_{k=1}^{d+2} \mu_k < 1 \) then there is a metric with constant positive holomorphic sectional curvature on \(\mathbb{CP}^d\) and cone angle \(2\pi(1-\mu_i - \mu_j) \) along \(H_{ij}\). It lifts via the Hopf map to a PK cone metric on \(\C^{d+1}\) with volume density equal to \( (1- \sum_{k=1}^{d+2} \mu_k)^{d+1} \), as in Proposition \ref{Calabi ansatz prop}.
		
		\item   Let \(\mathcal{P}= \sqcup_{\alpha}B_{\alpha} \) be a partition of \( \{1, \ldots, n+3\} \) such that \( \sum_{k \in B_{\alpha}} \mu_k < 1 \) for each \(\alpha\). If  \( p \in X^{\mathcal{P}} \), then the tangent cone of the complex hyperbolic metric on \(X\) at \(p\) is
		\[ \left( \prod_{\alpha} g_{\alpha} \right) \times \mathbb{C}^{|\mathcal{P}|-3}, \]
		where \(g_{\alpha}\) are the PK cones corresponding to the sets \(B_\alpha\) as in point \((1)\). Its volume density is  \( \prod_{\alpha} (1- \sum_{k \in B_{\alpha}}\mu_k )^{|B_{\alpha}|-1}\).

	\end{enumerate}
	
\end{proposition}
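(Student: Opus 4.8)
The plan is to treat the two points essentially separately, with point (1) providing the local model pieces that get assembled in point (2).

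For point (1), the strategy is to identify the data $(\mathbb{CP}^d, \sum_{i<j}(\mu_i+\mu_j)H_{ij})$ with the log-Fano setup of Proposition \ref{Calabi ansatz prop} and invoke the known existence of constant positive holomorphic sectional curvature metrics on such hyperplane arrangements. Concretely, I would first check that the pair is log-Fano: one computes $K_{\mathbb{CP}^d} + \sum_{i<j}(\mu_i+\mu_j)[H_{ij}] = \mathcal{O}(-(d+1) + \binom{d+1}{1}\sum_k \mu_k)$ — each index $k$ appears in exactly $d$ of the $\binom{d+2}{2}$ hyperplanes $H_{ij}$ wait, more carefully: $H_{ij}$ omits $v_i,v_j$, so each $v_k$ lies on $H_{ij}$ iff $k\notin\{i,j\}$, hence $\mu_k$-weighted count is $\sum_{i<j}(\mu_i+\mu_j) = (d+1)\sum_k\mu_k$ since each $\mu_k$ appears with multiplicity $d+1$. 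So the log canonical class is $\mathcal{O}((d+1)(\sum_k\mu_k - 1))$, which is anti-ample precisely when $\sum_k\mu_k<1$. The existence of the constant holomorphic sectional curvature KEcs metric in this range is the content of the inductive construction in Section 3 of \cite{thurston} (for $d=1$ this is the Troyanov spherical metric; the inductive step uses exactly the Calabi ansatz of Proposition \ref{Calabi ansatz prop} applied to the lower-dimensional arrangement), so I would cite that rather than reprove it. Given the metric, Lemma \ref{L1} and Proposition \ref{Calabi ansatz prop} produce the PK cone lift on $\mathbb{C}^{d+1}$, and the volume density formula \eqref{voldensity} specializes: with $X=\mathbb{CP}^d$, $I=d+1$, and the total weight $d\beta_{\mathrm{eff}} - d + I$ collapsing to $(d+1)(1-\sum_k\mu_k)$, the density becomes $(1-\sum_k\mu_k)^{d+1}$ after $c_1(X)^n/(n+1)^{n+1}$ cancels against $I/m$ and the $(d+1)$-st power. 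That the cone is polyhedral Kähler (not just a log CY cone) in this range follows because all cone angles $2\pi(1-\mu_i-\mu_j)$ are less than $2\pi$ and we are in the constant holomorphic curvature setting, as discussed in the paragraph preceding the proposition and in \cite{panov}.

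For point (2), the plan is to combine the tangent cone analysis already carried out in the body of the text with an inductive/product structure argument. The key geometric input is that the strata $X^{\mathcal{P}}$ are totally geodesic and the induced metric is the complex hyperbolic metric on $\mathcal{M}_{0,|\mathcal{P}|}$ with the grouped weights $\sum_{j\in B_\alpha}\mu_j$ — this is the McMullen/Deligne–Mostow statement recalled just before the proposition. So at $p\in X^{\mathcal{P}}$ the tangent cone splits as a product of the tangent cone along the stratum (which, since the stratum metric is the smooth complex hyperbolic metric on $\mathcal{M}_{0,|\mathcal{P}|}$ near the configuration where all $|\mathcal{P}|$ points are distinct, contributes a flat $\mathbb{C}^{|\mathcal{P}|-3}$ factor — note $\dim_\mathbb{C}\mathcal{M}_{0,|\mathcal{P}|} = |\mathcal{P}|-3$) and the transverse tangent cone, which is the product over $\alpha$ of the local models for each collision $B_\alpha$. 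Each such local model is exactly the PK cone $g_\alpha$ on $\mathbb{C}^{|B_\alpha|-1}$ from point (1), applied with $d = |B_\alpha|-2$ and weights $\{\mu_k\}_{k\in B_\alpha}$ — here one uses that when the points in $B_\alpha$ collide, the relevant divisors $D_{ij}$, $i,j\in B_\alpha$, are locally the hyperplane arrangement $\mathcal{H}$ described in the $X^{st}$ discussion, and the restriction of $\omega_{WP}$ transverse to the stratum is asymptotic to the lift of the constant curvature metric on that arrangement (this is the $n$-codimensional case worked out explicitly in the text, now applied in each block). The product of volume densities formula is then immediate from multiplicativity of volume density under Riemannian products together with the density $(1-\sum_{k\in B_\alpha}\mu_k)^{|B_\alpha|-1}$ from point (1) and density $1$ for the flat factor.

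The main obstacle I anticipate is making rigorous the claim that the tangent cone of $\omega_{WP}$ at $p\in X^{\mathcal{P}}$ genuinely splits as the asserted product — i.e., that the Weil–Petersson metric is, to leading order near $p$, the product of the induced stratum metric and the transverse model, rather than merely having the right pieces. For $|\mathcal{P}|=3$ the stratum is a point and there is no splitting issue, which is why that case is treated by hand in the text; for $|\mathcal{P}|\geq 4$ one needs that the cross-terms in the expansion of $-i\dd\log\int i\Omega_z\wedge\overline{\Omega_z}$ coupling stratum directions to transverse directions are lower order. I would handle this by localizing: near $p$, split the $n+3$ marked points into the clusters $B_\alpha$ plus the fixed points $0,1,\infty$ used to rigidify, write the period integral as a sum/product of contributions from small discs around each cluster centroid, and observe that the logarithm of the total integral decouples into a sum of cluster-wise contributions plus a smooth (hence negligible for the tangent cone) remainder — essentially the same computation as in the codimension-$n$ case but performed blockwise. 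The remaining verifications (that the metric is locally isometric to $\mathbb{CH}^n$ away from divisors, that angles are as claimed) are already established in the cited references \cite{thurston, delignemostow} and need only be quoted. I would also remark that the klt hypothesis $\sum_{k\in B_\alpha}\mu_k<1$ for every $\alpha$ is exactly what guarantees each block contributes an honest (bounded-density) PK cone rather than a cuspidal degeneration, consistent with the log-canonical-but-not-klt locus being $X^{ps}$.
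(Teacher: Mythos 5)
Your proposal is correct and follows essentially the same route as the paper: the proposition is explicitly a summary of the preceding discussion, which establishes point (1) by checking the pair \( (\mathbb{CP}^d, \sum_{i<j}(\mu_i+\mu_j)H_{ij}) \) is log Fano, citing Thurston's Section 3 for the constant-curvature metric, and specializing Proposition \ref{Calabi ansatz prop} and Equation \ref{voldensity} for the lift and density, while point (2) rests on the totally geodesic stratification and blockwise product structure following McMullen. If anything, you are more candid than the paper about the one genuinely delicate step — that the tangent cone splits as the asserted product for \(|\mathcal{P}|\ge 4\) — and your proposed blockwise localization of the period integral is a reasonable way to supply the detail the paper leaves to the references.
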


Note that, by using the first Chern class of the hyperplane section to identify \(H^2_{dR}(\mathbb{CP}^d) \cong \mathbb{R} \), we have that \( c_1 (\mathbb{CP}^d, \sum_{i < j} \mu_{ij} H_{ij}) = (d+1) (1 - \sum_{k=1}^{d+2}\mu_k) \), where \( \mu_{ij} = \mu_i + \mu_j \) and \( \sum_{i < j} \mu_i + \mu_j = (d+1) \sum_{k=1}^{d+2}\mu_k \), and the pair is indeed log Fano. The formula for the volume density follows immediately from Equation \ref{voldensity} and it agrees with the one given by Thurston in Proposition 3.6 in \cite{thurston}. If we consider moduli of unmarked points then we must quotient by permutations of points of equal angle. The complex links might be orbifold quotients of the above metrics, lifted to the real link of the cone via a Seifert fibration.

Now we discuss the case  \( p \in X^{ps} \). We consider the case   \( p \in X^{ps} \) is a smooth point in the moduli space. Without loss of generality we can assume that \( B_2=\{n+2,n+3\}\) and  \( \sum_{k=1}^{n+1} \mu_k = 1 \). We can slightly change the cone angles, so that  \( \sum_{k=1}^{n+1} \mu_k < 1 \). By continuity of the Weil-Petersson metric under  change of parameters and Equation \ref{conetocusp},  we conclude  that \( \omega_{WP} \) is a cuspidal metric  $- i \dd \log (- \log (t^2)) $  around \(p\) with complex link the log Calabi-Yau pair \( ( \mathbb{CP}^{n-1}, \sum_{i < j} (\mu_i + \mu_j) H_{ij} ) \) endowed with a polyhedral K\"ahler metric with cone angle \( 2 \pi (1 - \mu_i - \mu_j) \) along \(H_{ij}\). Thus:

\begin{proposition}\label{P5} 	Let \( \{v_1, \ldots, v_{d+2}\} \subset \mathbb{CP}^d \) be in general position and let \(\{\mu_k\}_{k=1}^{d+2}\) with $0 < \mu_k< 1$. Set 
	\(H_{ij} = \mbox{span} \{v_1, \ldots, \hat{v_i}, \hat{v_j}, \ldots, v_{d+2} \}. \) 
	\begin{itemize}
	
	\item If \( \sum_{k=1}^{d+2} \mu_k = 1 \) then there is a PK metric on \(\mathbb{CP}^d\) with cone angle \(2\pi(1-\mu_i - \mu_j) \) along \(H_{ij}\). It lifts to a cuspidal complex hyperbolic metric on the punctured ball \( B \setminus \{0\} \subset \C^{d+1}\) with finite volume around zero, as in Proposition \ref{KEcusps}.
		
		\item If \( \mathcal{P} = B_1 \sqcup B_2 \) with \(|B_2|=2\), then the complex hyperbolic metric on \(X\) at the point represented by \(\mathcal{P}\) is a cusp with complex link the  PK metric above on \(\mathbb{CP}^{|B_1|-2}\).
		
	\end{itemize}
\end{proposition}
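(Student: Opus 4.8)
The plan is to derive Proposition~\ref{P5} as a direct combination of Proposition~\ref{Calabi ansatz prop}, Proposition~\ref{KEcusps}, and the limiting argument of Equation~\ref{conetocusp}, applied to the Deligne--Mostow setup recalled above. For the first bullet point, I would start from the observation already made in the text that the pair $(\mathbb{CP}^d, \sum_{i<j}(\mu_i+\mu_j)H_{ij})$ with $\sum_{k=1}^{d+2}\mu_k=1$ is log Calabi--Yau: indeed $c_1(\mathbb{CP}^d, \sum_{i<j}\mu_{ij}H_{ij}) = (d+1)(1-\sum_k\mu_k) = 0$. By the work of Thurston (comparing with Section~3 of \cite{thurston}), this pair carries a polyhedral K\"ahler metric $\omega_{KE}$ with cone angle $2\pi(1-\mu_i-\mu_j)$ along $H_{ij}$, and it is Ricci-flat on the complement of the $H_{ij}$'s since it is the degenerate (flat) case of the constant holomorphic sectional curvature family. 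Normalizing $[\omega_{KE}] = 2\pi c_1(\mathcal{O}(1)) = 2\pi c_1(-L)$ with $L = \mathcal{O}(-1)$, the hypotheses of Proposition~\ref{KEcusps} are met, so $\omega_{cusp} = -i\dd\log(-\log\rho^2)$ on $\Delta^*(L) = B\setminus\{0\} \subset \C^{d+1}$ is a K\"ahler metric with $\mathrm{Ric}(\omega_{cusp}) = -(d+2)\omega_{cusp}$, cone angle $2\pi(1-\mu_i-\mu_j)$ along $\pi^{-1}(H_{ij})$, complete, and with finite volume near $0$. The only remaining point for the first bullet is that this cuspidal metric is genuinely \emph{complex hyperbolic}: this follows because its transverse geometry (the link) has constant holomorphic sectional curvature, or directly by the covering-map computation at the end of the proof of Lemma~\ref{cusplemma} adapted to the conical setting, exhibiting $\omega_{cusp}$ as a quotient of $\mathbb{CH}^{d+1}$ with conical singularities.

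For the second bullet, the input is the local structure of $X$ near $p \in X^{ps}$ with $|B_2| = 2$ established in the paragraph preceding the statement: in that case $p$ is a smooth point of $X$, and after a small perturbation of the weights $\mu_k$ (keeping $\sum_{k\in B_1}\mu_k$ slightly below $1$) the point $p$ lies in the stable locus and the Weil--Petersson metric has, by Proposition~\ref{posandpk}(2) (or rather its codimension-$n$ specialization discussed just before it), tangent cone the PK cone $g$ over $(\mathbb{CP}^{|B_1|-2}, \sum_{i<j}(\mu_i+\mu_j)H_{ij})$. Modifying the cones to constant negative Ricci as in Lemma~\ref{L2} and taking the limit as the perturbed weights return to $\sum_{k\in B_1}\mu_k = 1$, Equation~\ref{conetocusp} shows that the rescaled metrics $-i\gamma(\beta)^{-1}\dd\log(1-r_\beta^2)$ converge to $\omega_{cusp} = -i\dd\log(-\log\rho^2)$. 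Since the Weil--Petersson metric is continuous under change of the $\mu_k$ parameters (Section~2 of \cite{delignemostow}, Proposition~3.5 of \cite{thurston}), its germ at $p$ agrees with this limiting cuspidal model, whose complex link is precisely the PK metric on $\mathbb{CP}^{|B_1|-2}$ from the first bullet.

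The main obstacle I anticipate is the continuity/convergence step: making precise that the Weil--Petersson metric really does converge, as the weights cross the wall $\sum_{k\in B_1}\mu_k = 1$, to the cuspidal model rather than merely degenerating, and that this convergence is strong enough (e.g.\ in $C^0_{loc}$ on K\"ahler potentials away from the divisors, with uniform control near the conical strata) to identify the limiting germ. The algebraic bookkeeping — verifying $c_1 = 0$, matching the cone angles $2\pi(1-\mu_i-\mu_j)$, and checking that $\gamma(\beta)\to 0$ with the correct linear dependence on the perturbation parameter so that the elementary limit $\lim_{\epsilon\to0}(1-\lambda^\epsilon)/\epsilon = -\log\lambda$ applies — is routine and essentially already carried out in the text. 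I would also need to note, for the log-canonical (as opposed to klt) nature of the ambient pair, that the relevant local picture near $p$ only involves the klt divisors $H_{ij}$ and the flat transverse direction, so that Proposition~\ref{KEcusps} and its stated extension to simple normal crossing conical divisors indeed cover this case without modification.
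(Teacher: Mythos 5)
Your proposal follows essentially the same route as the paper: the first bullet is the lift of the flat (log Calabi--Yau, since $c_1(\mathbb{CP}^d,\sum_{i<j}\mu_{ij}H_{ij})=(d+1)(1-\sum_k\mu_k)=0$) PK metric via Proposition \ref{KEcusps}, and the second bullet is obtained by perturbing the weights into the klt range, invoking Proposition \ref{posandpk} for the tangent cone, and passing to the cusp limit via Equation \ref{conetocusp} together with continuity of the Weil--Petersson metric in the parameters. The convergence issue you flag is real, but the paper treats it at exactly the same level of rigor (it simply asserts continuity), so your argument matches the paper's.
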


More generally, if  \( \mathcal{P} = B_1 \sqcup B_2 \) with \( \sum_{k \in B_1} \mu_k = \sum_{k \in B_2} \mu_k =1 \), then the complex hyperbolic metric on \(X\) at the point represented by \(\mathcal{P}\) should be a cusp with complex link the product of the  PK metrics on \(\mathbb{CP}^{|B_1|-2}\) and \( \mathbb{CP}^{|B_2|-2} \) defined by \(B_1\) and \(B_2\) respectively.  For example if we consider the space of six conical points with all \( \mu = 1/3 \) then the partition \( \{1, 2, 3\}, \{4, 5, 6\} \), say, defines a cuspidal point in \(X\) with a neighbourhood biholomorphic to the three dimensional ordinary double point. We expect that around the cuspidal point \(\omega_{WP}\) is a cusp whose complex link is a product of two flat \( \mathbb{CP}^1 \), each of the factors being the double of an equilateral triangle.

\begin{remark}
Complex conjugation induces an isometric anti-holomorphic involution on \(X\), the set of fixed points is a totally geodesic totally real hyperbolic cone manifold of real dimension \(n\). It corresponds to points parametrizing doubles of polygons in the euclidean plane or equivalently points in the Riemann sphere lying on a circle. It is often easier to use a family of polygons to realize certain degenerations: for example the case of six conical points of equal angle converging to the cuspidal point can be realized as a sequence of hexagons degenerating to a trapezoid, and finally stretching the trapezoids by bringing together the two parallel sides. 
\end{remark}

The metrics of Proposition \ref{posandpk} are connected, by varying the cone angles, to the complex hyperbolic metrics on configuration spaces. More precisely we have \( \mathcal{M}_{0, d+3} \cong \mbox{Bl}_{v_1, \ldots, v_{d+2}} \mathbb{CP}^d \setminus (\cup_{i, j} \tilde{H}_{ij} \cup_{k=1}^{d+2} E_k)  \) where \( \tilde{H}_{ij} \) is the proper transform of \(H_{ij}\) and the \(E_k\) are the exceptional divisors. If we consider the space of \(d+3\) points in \(\mathbb{CP}^1\) with weights such that \(\mu_{d+3} + \mu_j > 1 \) and \(\mu_i + \mu_j < 1\) for any \( 1 \leq i, j \leq d+2 \), then \(X= \mathbb{CP}^d\) (at least when \(\sum_{k=1}^{d+2}\mu_k\) is sufficiently close to \(1\)) and \(D_{ij}=H_{ij}\) is the hyperplane arrangement \(\mathcal{H}\). We obtain a \(d+2\) dimensional family of complex hyperbolic metrics parametrized by \(\mu_1, \ldots, \mu_{d+2}\) with \( 1 < \sum_{k=1}^{d+2} \mu_k < 2 \), since \( 0 < \mu_{d+3} = 2 - \sum_{k=1}^{d+2} \mu_k < 1\), which extends continuously the family of metrics in Proposition \ref{posandpk} when \( 0 < \sum_{k=1}^{d+2} \mu_k \leq 1 \). If \( 1 < \sum_{k=1}^{d+2} \mu_k < 1 + \epsilon \) then the pair \( ( \mathbb{CP}^d, \sum_{i < j} \mu_{ij} H_{ij} ) \) is klt, as the weights change and the sum increases we might create points where the pair is strictly log canonical and the metrics develop a cusp, the family extends by changing the space \( X = \mathbb{CP}^d  \) blowing up the point and introducing a small angle at the exceptional divisor.

For a concrete example, when \(d=2\) we have constant positive holomorphic sectional curvature metrics with cone angle \(2\pi\beta\) along the complete quadrilateral for \( 1/2 < \beta < 1 \). At \(\beta=1/2\) we have a PK metric and for \(1/3 < \beta < 1/2\) we have complex hyperbolic metrics. At \( \beta = 1/3 \) we have a complex hyperbolic metric with four cuspidal points at the triple points of the arrangement, when \( \beta< 1/3 \) the space changes to the blow up of \(\mathbb{CP}^2\) at the triple points (that is, the moduli becomes the del Pezzo surface of degree four) and the complex hyperbolic metrics have cone angle \(2\pi \beta\) along the proper transform of the six lines and cone angle $2\pi \gamma$, with \(2\gamma = 1-3\beta\), along the four exceptional divisors. Restricting the metrics in the family to a small sphere centered at a triple point exhibits a family of metrics on the \(3\)-sphere with spherical, Nil and \(\tilde{SL}(2, \mathbb{R})\) geometry as the conical angle \(2\pi\beta\) along the three Hopf circles varies. Moreover, note that such examples describe how the geometry changes continuously in the Weil-Petersson metrics, under a birational transform of moduli spaces induced by variation of GIT quotients.

\subsection{Hypergeometric functions}

The periods of \(\Omega_z\) along the edges of two disjoint trees with vertices at \( \{ z_1, \ldots, z_{n}, 0, 1, \infty \} \)
define a map \(F\) to the cone of positive vectors in \( \C^{1, n} \), the metric is the pull-back of the complex hyperbolic metric on the ball in \( \mathbb{P} (\C^{1, n}) \); that is \( \omega_{WP} = i \dd \log ( \langle F, F \rangle_{1, n} ) \). The classical case is the one of four points in the projective line. The periods \( \int_0^1 \Omega_z, \int_{z}^{\infty} \Omega_z \) are two linearly independent solutions of the hypergeometric equation. The quotient \( \int_0^1 \Omega_z / \int_{z}^{\infty} \Omega_z \) maps the upper half plane to a circular triangle. Our restriction on the \(\mu\) parameters implies that the triangle is hyperbolic and its double is the complex hyperbolic metric described above. For more than four points, the periods of \(\Omega_z\) are solutions of higher dimensional extensions of the hypergeometric equation and the K\"ahler potentials for the complex hyperbolic metrics are explicit in terms of quotients of corresponding  hypergeometric functions.  The same discussion applies not only for the negatively curved metrics on the moduli space but also for the flat and positively curved metrics on the complex links. 

Given a unitary reflection group \( G \subset U (n) \) one can construct and explicit biholomorphism \( F: \mathbb{C}^n / G \to \C^n \) with invariant polynomials as components. We can push-forward the euclidean metric to obtain a polyhedral K\"ahler cone with conical angles along the image under \(F\) of the reflecting hyperplanes of \(G\). If we write \( z = F(u) \), the K\"ahler potential for the PK cone is \( |u|^2 \) and it can be written as a explicit function of \(z\). As a consequence one can write down explicit expressions for quotients of solutions of the corresponding differential equations in this orbifold regime. 
The same apply to reflection groups \( G \subset PU (n, 1) \). For example the double of an equilateral triangle give us a metric \(g_{flat}\) on \( \mathbb{CP}^1 \). We can lift it to a complex hyperbolic metric \(g_{cusp}\) in the unit ball in \( \mathbb{C}^2 \) with a cuspidal point at the origin and cone angle \(2\pi/3\) along three lines that go through the origin. There is a reflection group \( G \subset PU(2, 1) \) and an explicit identification \(B \cong B/G\) such that \( g_{cusp} \) is the quotient metric, see \cite{yoshida}. As a consequence one obtains an explicit expression for the Riemann mapping from the triangle to the upper half plane, whose inverse is the developing map for \(g_{flat}\)  given by the Schartz-Christoffel integral \( \int_0^z t^{-2/3} (1-t)^{-2/3} dt \).

\section{K\"ahler-Einstein equations}
\subsection{Volume densities and tangent cones}

The CY cones constructed from log KE metrics via Calabi ansatz are expected to appear more generally as models for tangent cones of  klt varieties with weak KE metrics,  when the tangent cone is smooth and (quasi)-regular. More generally, one should consider irregular situations and jumping to singular cones.  Moreover, the existence of such log CY metrics should be also equivalent to a notion of log K-polystability in the sasakian case similarly to \cite{collinszek}. However, we should  remark that, even if expected, the generalization of the results of Donaldson-Sun for tangent cones of limit spaces to the logarithmic setting are still missing. On the other hand, the work of Li \cite{chilivaluations} on characterizing the tangent cones fully algebro-geometrically via the infimum of the normalized volume of valuations extends immediately to the logarithmic setting and, in certain cases, such invariant can be checked to be equal to the volume density \cite{chiliextensions}.

\subsection{Stratified K\"ahler metrics}
 
Here we want to give a possible notion of a KEcs metric \(g_{KE}\) on a klt pair \((X, D)\) which incorporates the existence of metric tangent cones, similarly to the one given in the case of constant holomorphic sectional curvature log metrics. The tangent cones are singular, potentially irregular, CY cones and its metric singularities can be described inductively. Given a point \(p \in X \) the theory of normalized volumes of valuations already tells us the right tangent cone \((C(Y), g_C)\) one should consider. We want to express this in differential-geometric terms, we divide the task into three steps of increasing degree of generality. \emph{Step 1:} There is a biholomorphism \(F\) from  a neighbourhood of the apex of \(C(Y)\) to a neighbourhood of \(p\), we also assume that \(F\) matches the log sets. In this case we can require that \( |F^{*}g_{KE} - g_C |_{g_C} = O(r^{-\mu}) \) for some \(\mu>0\). A curious fact is that the global geometry of the pair influences and one needs to modify \(F\) by composing with a suitable automorphism of \(C(Y)\). \emph{Step 2:} We have to replace \(F\) by a diffeomorphism `approximately holomorphic' in some asymptotic sense. This is the case for example when \( \dim_{\mathbb{C}} =2 \) and the support of \(D\) has an ordinary \(d\)-tuple plane complex curve singularity which is not linearisable via an holomorphic change of coordinates to a collection of \(d\) lines going through the origin. \emph{Step 3:} Finally one must consider the case that \( (C(Y), o)\) is not locally homeomorphic to \((X, p)\), as in \cite{szekelyhidi}; the coordinate \(F\) should be replaced by a one parameter family of maps that approximate the geometry. This is the case for example of the cuspidal point \(p=0\) of the pair \( (\C^2, (1-\beta)\{w^2=z^3\}) \)  with \( 5/6 < \beta < 1 \). We have \( C(Y) = \C \times \C_{\gamma} \). There is \( u: \C \to \mathbb{R} \) with \( u(\xi) = u (-\xi) \) such that \(i\dd u>0\), \( i \dd u = |\xi-1|^{2\beta-2} |\xi+1|^{2\beta-2} d\xi d\overline{\xi} \)  in some ball and \( u = |\xi|^{2\gamma} \) outside a compact set. Define \( \omega = i \dd \varphi \) with \( \varphi = u (wz^{-3/2}) \). Then \(\omega\) is a K\"ahler metric in a neighbourhood of \( 0 \in \C^2 \) and, when \( 5/6 < \beta < 1 \), its tangent at \(0\) is \(C(Y)\). One would expect to perturb \(\omega\) to a Ricci-flat metric preserving the asymptotics. 

Having sorted a reasonable definition one then would like to understand the linearization of the KE equation. More precisely we want suitable spaces for which \( i \dd \circ \Delta^{-1} \) is a bounded operator, where \(\Delta\) denotes the Laplacian of the metric. There are some general results on analysis on stratified Riemannian manifolds but the K\"ahler case has many special features, see for instance \cite{donaldsoncs} and \cite{heinsun}. Of course, such existence theory for KE \emph{with these prescribed metric singularities} seems out of reach at the moment. Yau's second order estimate breaks in a fundamental way, as the curvature of a non-flat Calabi-Yau cone blows-up from both sides at the origin with quadratic growth. From a different point of view, such type of results can be interpreted as regularity results for weak KE metrics. At the moment, one special doable case is to restrict to two dimensional flat tangent cones \cite{deborbonspotti}; another is to assume a smoothability condition and appeal to Riemannian convergence theory and recent results for Gromov-Hausdorff limits of KE metrics \cite{heinsun}.

\subsection{Chern-Weil formulas}
Let \(\omega\) be a K\"ahler metric and let \( c_k (\omega) \) be the standard representative of the \(k\)-th Chern class in terms of the curvature of \(\omega\). In general terms we expect that the integrals \(\int_X c_k(\omega) \wedge \omega^{n-k} \) are affected by the singularities of \(\omega\) only up to co-dimension \(k\), as suggested by the following: 

\begin{proposition}
	Let \( \omega_C = (i/2) \dd r^2 \) be a non-flat cone of complex dimension \(n\) singular only at the apex. Then \(c_k (\omega_C) \wedge \omega_C^{n-k} \) is locally integrable in a neighborhood of the apex of the cone if and only if \( k < n \). 
\end{proposition}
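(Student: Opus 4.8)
The plan is to work in the ``polar'' coordinates $(r,\theta, y)$ adapted to the cone $\omega_C = dr^2 + r^2\overline{g}$ on $L^\times$, where $y$ denotes coordinates on the link $Y$, and to track the homogeneity degree in $r$ of each term appearing in $c_k(\omega_C)\wedge\omega_C^{n-k}$. The curvature $2$-form $\Theta$ of a Riemannian cone satisfies $\Theta = O(r^{-2})$ pointwise in the sense that, written against the orthonormal coframe, its components behave like $r^{-2}$ times smooth functions on the link (this is precisely the ``quadratic curvature blow-up'' mentioned in the discussion of Yau's estimate). Hence $c_k(\omega_C)$, being a degree-$k$ polynomial in $\Theta$, has coefficients of size $O(r^{-2k})$ against the orthonormal coframe, while $\omega_C^{n-k}$ contributes a factor that, together with the volume form, gives $r^{2n-1}\,dr\wedge(\text{link volume})$ after one accounts for the scaling of the coframe. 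The net result is that near $r=0$ the density of $c_k(\omega_C)\wedge\omega_C^{n-k}$ relative to the smooth measure $r^{2n-1}\,dr\,d\mathrm{vol}_Y$ is $O(r^{-2k})$, so the integral $\int_0^\epsilon r^{2n-1-2k}\,dr$ converges exactly when $2n-1-2k>-1$, i.e.\ $k<n$, and diverges for $k=n$.

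First I would set up the scaling argument carefully. Using the $\mathbb{C}^*$-action $m_\lambda$ on $L$, which acts on $\omega_C$ by $m_\lambda^*\omega_C = |\lambda|^{2}\omega_C$ (equivalently $m_\lambda^* r^2 = |\lambda|^2 r^2$), one checks that the Chern forms are invariant in the appropriate sense: $m_\lambda^* c_k(\omega_C) = c_k(m_\lambda^*\omega_C) = c_k(\omega_C)$ since Chern forms are built from $\partial\overline\partial\log\det g$ and are scale-invariant. Therefore $m_\lambda^*\big(c_k(\omega_C)\wedge\omega_C^{n-k}\big) = |\lambda|^{2(n-k)} c_k(\omega_C)\wedge\omega_C^{n-k}$, i.e.\ this $2n$-form is homogeneous of degree $2(n-k)$ under the dilation. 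A $2n$-form homogeneous of degree $2(n-k)$ on the cone is integrable near the apex iff $2(n-k)>0$, by comparison with $\int_0^\epsilon r^{2(n-k)}\,\frac{dr}{r}$ once one writes it in terms of the radial variable; this gives $k<n$ immediately for the ``if'' direction.

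For the ``only if'' direction, when $k=n$ the form $c_n(\omega_C)$ is a top-degree form homogeneous of degree $0$, so $\int_{\{r\le\epsilon\}} c_n(\omega_C) = \int_0^\epsilon \big(\int_Y (\text{stuff})\big)\frac{dr}{r}$, and I must show the link integral $\int_Y$ of the corresponding density is \emph{nonzero}; this is where non-flatness enters. The cleanest route is: if this integral vanished for all small $\epsilon$ one could integrate against a cutoff and use that $c_n(\omega_C)$ is closed to relate the integral to a boundary term which, by homogeneity, is independent of the radius; a genuinely vanishing integrand combined with the fact that $c_n = \frac{1}{n!}\big(\frac{i}{2\pi}\big)^n\det\Theta$ forces the curvature to be degenerate in a way incompatible with a non-flat metric — more concretely, on the orthonormal coframe $\det\Theta$ is $r^{-2n}$ times a function on $Y$ whose integral over $Y$ I claim is nonzero unless $\Theta\equiv0$, which can be seen by a pointwise positivity/sign analysis using that $\omega_C$ is Kähler so the curvature operator has a definite structure, or by noting that vanishing of this integral together with the Gauss–Bonnet-type identity on the link would force the transverse metric to be flat. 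I expect \textbf{this last point — showing the $k=n$ integrand has nonvanishing integral over the link, precisely using non-flatness — to be the main obstacle}, since the integrand $c_n(\omega_C)$ is not pointwise signed in general; the resolution is to express $c_n(\omega_C)$ in terms of the link geometry via the cone construction ($\overline g$ is $\eta$-Sasaki, cf.\ Proposition~\ref{P1}) and reduce to a statement about the transverse Kähler metric, where $\int_Y$ of the top transverse Chern form computes a positive multiple of $c_1^{\,?}$-type intersection number that vanishes only in the flat case.
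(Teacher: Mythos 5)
Your treatment of the direction $k<n\Rightarrow$ integrable is correct and, in substance, the same as the paper's. The scale\-/invariance of the Chern forms under the dilations of the cone is exactly the paper's opening estimate $|F|_{\omega_C}=O(r^{-2})$ (indeed $|F|_{\omega_C}=r^{-2}\cdot(\text{function on }Y)$ by homogeneity), and integrating the resulting pointwise bound $|c_k(\omega_C)\wedge\omega_C^{n-k}|=O(r^{-2k})\,\mathrm{dvol}$ against $r^{2n-1}\,dr\,\mathrm{dvol}_Y$ gives convergence precisely for $k<n$. The paper closes this direction differently, writing $c_k(\omega_C)\wedge\omega_C^{n-k}=d\bigl(c_k(\omega_C)\wedge\omega_C^{n-k-1}\wedge\eta\bigr)$ with $\eta=O(r)$ and estimating the boundary term at $r=\epsilon$ by $O(\epsilon^{2(n-k)})$; your direct estimate is equivalent and yields absolute integrability with no extra work. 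Two small points: the proposition is stated for an arbitrary (possibly irregular) cone, so the dilations should be taken to be the flow of $r\partial_r$, which is holomorphic for any K\"ahler cone, rather than the $\mathbb{C}^*$-action on $L$; and for the Calabi-ansatz cones one has $m_\lambda^* r^2=|\lambda|^{2\gamma}r^2$, not $|\lambda|^2 r^2$ (harmless, since only the existence of a one\-/parameter dilation group matters).

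The genuine gap is exactly where you predicted it, in the $k=n$ direction, but it is worse than an obstacle: the quantity you need to be nonzero is identically zero. For any metric cone $dr^2+r^2\overline{g}$ the curvature tensor satisfies $R(\cdot,\cdot,\cdot,\partial_r)=0$, and since $\omega_C$ is K\"ahler the curvature endomorphism commutes with $J$, so $F(X,Y)$ annihilates the holomorphic Euler field $r\partial_r-iJ(r\partial_r)\in T^{1,0}$ for every $X,Y$. In a frame whose first element is this field the curvature matrix has an identically vanishing column, hence $c_n(\omega_C)=\det\bigl(\tfrac{i}{2\pi}F\bigr)\equiv 0$ away from the apex, whatever the link geometry and regardless of non\-/flatness. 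Consequently your degree\-/zero\-/homogeneous density $h$ on $Y$ vanishes identically, $\int_Y|h|=0$, and the $k=n$ form is (trivially) locally integrable; no positivity argument, Gauss--Bonnet identity on the link, or sign analysis of $\det\Theta$ can make the link integral nonzero. Be aware that the paper's own proof also establishes only the $k<n$ direction and is silent on $k=n$, so you are not being asked to match an argument that exists; the honest content of the top\-/degree case is not a failure of $L^1$\-/integrability of the smooth density but the Chern--Gauss--Bonnet point mass that $c_n$ carries at the apex when interpreted as a current, which is what the volume\-/density correction term in Equation \ref{logc2} is recording. A true statement worth proving in place of the ``only if'' is: for $k<n$ the $L^1$\-/extension of $c_k(\omega_C)\wedge\omega_C^{n-k}$ across the apex is closed with no residue there (your boundary estimate, or the paper's Stokes computation, gives exactly this), whereas for $k=n$ the closed current extending $c_n(\omega_C)$ must carry an atom at the apex determined by the geometry of the link.
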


\begin{proof}
	Write \(F\) for the Riemann curvature tensor of \(\omega_C\) regarded as a matrix valued two-form. We have \( |F|_{\omega_C} = O(r^{-2}) \) as \( r \to 0 \) and therefore, for the \(k\)-th product,   \( |\mbox{Tr}(F \wedge \ldots \wedge F)|_{\omega_C} = O (r^{-2k}) \). We have \( \omega_C = d \eta \) with \( \eta = O(r) \) and, if \( k  < n \), \( c_k (\omega_C) \wedge \omega_C^{n-k} = d (c_k (\omega_C) \wedge \omega_C^{n-k-1} \wedge \eta ) \). By Stokes, the local integrability follows from
	\begin{equation*}
		\int_{r = \epsilon} c_k (\omega_C) \wedge \omega_C^{n-k-1} \wedge \eta = O (\epsilon^{2(n-k)}) .
	\end{equation*}
\end{proof}
For a klt pair \((X, D)\)  with \(D=\sum_{j=1}^{N} (1-\beta_j) D_j\),  we define (at least when \(X\) and \(D_j\) are smooth), \( c_1(X, D) = c_1(X) - \sum_{j=1}^{N} (1-\beta_j) [D_j]  \) and \(	c_2 (X, D) \in H^4(X, \mathbb{R}) \) as:
\begin{equation} \label{logc2}
\begin{split}
	c_2 (X, D)  & = c_2 (X) + \sum_j (\beta_j-1) (-K_X \cdot D_j - D_j^2) \, + \\
	 & \sum_S (\nu_S - (1 + \sum_{S \subset D_j} (\beta_j -1) )) [S], 
\end{split}
\end{equation}
where \(S\) runs over the irreducible complex codimension two varieties obtained as intersections of the \(D_j\), \(\nu_S\) denotes the volume density at a generic point and \([S]\) is the Poincar\'e dual. In the simple normal crossing case our definition agrees with the one of Tian in \cite{tian} (Equation 2.6). The formula can be interpreted as a logarithmic Chern class with points weighted according to their volume densities. The first term corresponds to assigning volume density \(1\) at all points. In the second we subtract the singular strata in complex codimension one and re-add it  weighted by the volume density \(\beta_j\) at a generic point in \(D_j\). The term $1 + \sum_{S \subset D_j} (\beta_j -1)$ corresponds to \(S\) being counted with weight \(1\) in the first term and weight \(\beta_j -1\) for each divisor \(D_j\) that contains it. 

We expect that for KEcs metrics on \((X, D)\)  the following Bogomolov-Myaoka-Yau (BMY) inequality holds:
\begin{equation}
Prop(X, D) = \left( 2 (n+1) c_1(X, D)^2 - n c_2(X, D)  \right) [\omega]^{n-2} = \int_X |\mathring{Riem}|^2 \geq 0,
\end{equation}
with equality case characterizing constant holomorphic curvature. As a check, for the hyperplane arrangements considered in Propositions \ref{posandpk} and \ref{P5},  we have the following:

\begin{proposition}
	Let \( X = \mathbb{CP}^n \) and \( D = \sum_{i < j} \mu_{ij} H_{ij} \), then \[ n c_1 (X, D)^2 = 2 (n+1) c_2 (X, D) . \]
\end{proposition}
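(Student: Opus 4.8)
The plan is to reduce everything to the hyperplane class $h$ on $X=\mathbb{CP}^n$ together with a single combinatorial identity. Write $m=n+2$ and $S=\sum_{k=1}^{m}\mu_k$. Each $H_{ij}$ is a hyperplane, so $[H_{ij}]=h$, and since every index $k$ lies in $m-1=n+1$ of the pairs, $\sum_{i<j}\mu_{ij}=(n+1)S$. Hence $c_1(X,D)=c_1(\mathbb{CP}^n)-\sum_{i<j}\mu_{ij}[H_{ij}]=(n+1)(1-S)h$ and $c_1(X,D)^2=(n+1)^2(1-S)^2h^2$. It therefore suffices to prove $c_2(X,D)=\tfrac{n(n+1)}{2}(1-S)^2h^2$.

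Next I would enumerate the codimension-two strata of $\mathcal H=\bigcup_{i<j}H_{ij}$ and record, for each, the two pieces of local data entering Equation \ref{logc2}: the list of $H_{ij}$ containing it, and the volume density $\nu_S$ at a generic point. Identifying $H_{ij}$ with the $n$-element index set $\{1,\dots,m\}\setminus\{i,j\}$, one checks using genericity of the $v_k$ that a generic point of a codimension-two stratum lies either (Type A) on exactly the three hyperplanes $H_{ij},H_{ik},H_{jk}$ attached to a $3$-subset $\{i,j,k\}$ — the linear stratum $L_{ijk}=\mathrm{span}\{v_\ell:\ell\notin\{i,j,k\}\}$, whose transverse slice is three concurrent lines — or (Type B) on exactly the two hyperplanes $H_{ij},H_{kl}$ with $\{i,j\}\cap\{k,l\}=\emptyset$. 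These two families are disjoint since a Type A stratum contains $n-1$ of the $v_k$ while a Type B stratum contains only $n-2$, and every codimension-two intersection is of one of these two kinds. All strata are linear subspaces, so $[S]=h^2$ throughout, and by Proposition \ref{posandpk} (applied in the transverse slice, or to the relevant collision partition) the densities are $\nu_{L_{ijk}}=(1-\mu_i-\mu_j-\mu_k)^2$ and $\nu_{H_{ij}\cap H_{kl}}=(1-\mu_{ij})(1-\mu_{kl})$.

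Then I would substitute into Equation \ref{logc2}. With $c_2(\mathbb{CP}^n)=\binom{n+1}{2}h^2$, $-K_X\cdot H_{ij}-H_{ij}^2=nh^2$, and $\beta_{ij}-1=-\mu_{ij}$, the first two terms contribute $\bigl(\binom{n+1}{2}-n(n+1)S\bigr)h^2$. For the strata sum, the combination $\nu_S-\bigl(1+\sum_{S\subset H_{ab}}(\beta_{ab}-1)\bigr)$ collapses to a square: at $L_{ijk}$, with $a=\mu_i+\mu_j+\mu_k$, it equals $(1-a)^2-(1-2a)=a^2$; at $H_{ij}\cap H_{kl}$ it equals $(1-\mu_{ij})(1-\mu_{kl})-(1-\mu_{ij}-\mu_{kl})=\mu_{ij}\mu_{kl}$. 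So the remaining task is the identity
\[
\sum_{\{i,j,k\}}(\mu_i+\mu_j+\mu_k)^2+\sum_{\{i,j\}\sqcup\{k,l\}}(\mu_i+\mu_j)(\mu_k+\mu_l)=\tfrac{n(n+1)}{2}\,S^2 ,
\]
the first sum over $3$-subsets of $\{1,\dots,m\}$ and the second over unordered pairs of disjoint $2$-subsets. Expanding and counting multiplicities, each $\mu_s^2$ appears $\binom{m-1}{2}$ times from the first sum and not at all from the second, while each $\mu_s\mu_t$ ($s\neq t$) appears $2(m-2)$ times from the first and $(m-2)(m-3)$ times from the second, total coefficient $(m-2)(m-1)$; hence the left-hand side is $\tfrac{(m-1)(m-2)}{2}\bigl(\sum_s\mu_s^2+2\sum_{s<t}\mu_s\mu_t\bigr)=\tfrac{(m-1)(m-2)}{2}S^2=\tfrac{n(n+1)}{2}S^2$. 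Combining everything, $c_2(X,D)=\tfrac{n(n+1)}{2}(1-2S+S^2)h^2=\tfrac{n(n+1)}{2}(1-S)^2h^2$, so $2(n+1)c_2(X,D)=n(n+1)^2(1-S)^2h^2=n\,c_1(X,D)^2$.

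The main obstacle, I expect, is the bookkeeping of the middle step: verifying that the list of codimension-two strata is complete and non-redundant (which genuinely uses genericity of the $v_k$, so that $L_{ijk}$ is not accidentally a Type B stratum and no extra hyperplane passes through a generic point), and pinning down the volume densities. The clean outcome that each local correction term $\nu_S-\bigl(1+\sum(\beta-1)\bigr)$ equals exactly $\bigl(\sum_{k\in B}\mu_k\bigr)^2$ for a triple collision and $\mu_{ij}\mu_{kl}$ for a double-pair collision is precisely what makes the final identity so clean, so I would isolate it as a small lemma; after that the remaining combinatorial identity is a routine counting argument.
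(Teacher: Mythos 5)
Your proposal is correct and follows essentially the same route as the paper's proof: reduce to the hyperplane class, compute the first two terms of the log $c_2$, identify the two types of codimension-two strata (triple intersections $H_{ij}\cap H_{ik}\cap H_{jk}$ and transverse pairs $H_{ij}\cap H_{kl}$) with densities $(1-\mu_i-\mu_j-\mu_k)^2$ and $(1-\mu_{ij})(1-\mu_{kl})$, and verify the resulting combinatorial identity $\sum(\mu_i+\mu_j+\mu_k)^2+\sum\mu_{ij}\mu_{kl}=\binom{n+1}{2}S^2$ by counting coefficients. The only cosmetic difference is that you count the multiplicities of $\mu_s^2$ and $\mu_s\mu_t$ in a single pass while the paper evaluates the two sums $A$ and $B$ separately, and you make explicit the genericity check that the stratum list is complete, which the paper leaves implicit.
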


\begin{proof}
	We identify the even dimensional real cohomology groups of projective space with \(\mathbb{R}\) by sending the powers of the hyperplane section to \(1\). Then \( c_1 (X, D) = (n+1) - \sum_{i < j} \mu_{ij} = (n+1) (1 - S) \) with \(S = \sum_{j=1}^{n+2} \mu_j \). To compute \(c_2(X, D)\) note first that \( c_2 (\mathbb{CP}^n) = (n+1)n/2 \) and \(\sum_{i < j} \mu_{ij} (K_{\mathbb{CP}^n} \cdot H_{ij} + H_{ij}^2 ) = - n(n+1) S \). The singular codimension two strata is divided into two types: either \( S= H_{ij} \cap H_{kl} \) with \( \{i, j\} \cap \{k, l\} = \emptyset \) or \(S = H_{ij} \cap H_{ik} \cap H_{jk} \) with \(i<j<k\). We write \(\nu_{ijkl}\) and \(\nu_{ijk}\) for the corresponding volume densities at generic points, so that \( \nu_{ijkl} = (1-\mu_{ij}) (1-\mu_{kl}) \) and \( \nu_{ijk} = \gamma_{ijk}^2 \) with \( 2 \gamma_{ijk} = 2 - (\mu_{ij} + \mu_{ik} + \mu_{jk} ) \) equivalently \( \gamma_{ijk} = 1 - (\mu_i + \mu_j + \mu_k) \). The contribution of the codimension two strata in Equation \ref{logc2} is the sum of two terms, \(A + B\), with \( A = \sum_I (\nu_{ijkl} + \mu_{ij} + \mu_{kl} -1 ) = \sum_I \mu_{ij}\mu_{kl} \) where \(I\) is the index set of \((i, j, k, l)\) with \(i < j, k<l\), \( \{i, j\} \cap \{k, l\} = \emptyset \) and \(i <k\); and \(B = \sum_{i<j<k} (\gamma_{ijk}^2 +\mu_{ij}+ \mu_{ik} +\mu_{jk} -1 ) = \sum_{i<j<k} (\mu_i + \mu_j + \mu_k)^2 \). It is easy to see that
	\begin{equation*}
		A = \sum_I \mu_{ij} \mu_{kl} = \sum_I \mu_i \mu_k + \mu_i \mu_l + \mu_j \mu_k + \mu_j \mu_l = 2 \binom{n}{2} \sum_{i<k} \mu_i \mu_k
	\end{equation*}  
	
	\begin{equation*}
	B = \sum_{i<j<k} (\mu_i^2 + \mu_j^2 + \mu_k^2) + 2 \sum_{i<j<k} (\mu_i \mu_j + \mu_i \mu_k + \mu_j \mu_k ) = \binom{n+1}{2} \sum_{j=1}^{n+2} \mu_j^2 + 2n \sum_{i<j} \mu_i \mu_j .
	\end{equation*}
	Since \( \binom{n}{2} + n = \binom{n+1}{2} \) we obtain \( A + B = \binom{n+1}{2} S^2 \). We conclude that
	\begin{equation*}
		c_2(X, D) = \frac{(n+1)n}{2} (1-2S + S^2)
	\end{equation*}
	and therefore \( 2 (n+1) c_2 (X, D) = (n+1)^2 n (1-S)^2 = n c_1(X, D)^2 \).
\end{proof}

Given \(X\) and \( D_1, \ldots, D_N \subset X \) we obtain a semi-positive definite quadratic form in \( \mu = (\mu_1, \ldots, \mu_N) \) defined as \( Prop (X, \sum_{j=1}^{N}\mu_j D_j) \). If \(X = \mathbb{CP}^n\) then the constant term of the quadratic form vanishes, the hyperplane arrangement \(\mathcal{H}\) studied before has a subspace of dimension \(n+2\) in the null-space; when \(n=2\) the complete quadrilateral \(= \mathcal{H}\) is characterized as the one with maximal kernel (which is indeed 4 dimensional) among all line arrangements \cite{tretkoff}. There are other hyperplane arrangements (given by the mirrors of unitary reflection groups) that define quadratic forms with non-trivial kernel and also lead to constant holomorphic curvature metrics, different from the one discussed, see \cite{hirzebruch} and \cite{couwenbergheckmanlooijenga}. The BMY inequality leads to interesting relations between the volume densities of different strata. For example if \( c_1(X, D) = c_2 (X, D) = 0 \) then the weighted Euler characteristic should vanish. On the other hand we know a priori that certain singularities of the pair are forbidden if \((X, D)\) supports constant holomorphic curvature, for example an ordinary double point with no conical divisor going through it; one might then ask for an improved BMY inequality for pair having such singularities.

\bibliographystyle{plain}
\bibliography{localmodel}

\end{document}